\pgfplotsset{compat=newest}
\newtheorem{theorem}{Theorem}[section]
\newtheorem{lemma}[theorem]{Lemma}
\newtheorem{proposition}[theorem]{Proposition}
\newtheorem{question}[theorem]{Question}
\newtheorem{conjecture}[theorem]{Conjecture}
\theoremstyle{definition}
\newtheorem{definition}[theorem]{Definition}
\newtheorem{example}[theorem]{Example}
\theoremstyle{remark}
\newtheorem{remark}[theorem]{Remark}
\numberwithin{equation}{section}
\def\C{\mathbb C}
\def\D{\mathbb D}
\def\R{\mathbb R}
\begin{document}

\dedicatory{In memory of Prof. Marek Jarnicki}

\numberwithin{equation}{section}
	
	\title[Open questions and conjectures in convex domains]
	{Some open questions and conjectures about visibility and iteration in bounded convex domains in $\C^N$}

	\author{Filippo Bracci}
	\author{Ahmed Yekta \"Okten}  
	
	\address{F.Bracci, A. Y. \"Okten:  Dipartimento di Matematica, Universit\`a Di Roma ``Tor Vergata''€, Via Della Ricerca Scientifica 1, 00133 Roma, Italy}

	\email{fbracci@mat.uniroma2.it}

	\email{okten@mat.uniroma2.it}

	\thanks{Partially supported by the MIUR Excellence Department Project 2023-2027 MatMod@Tov awarded to the
		Department of Mathematics, University of Rome Tor Vergata, by PRIN Real and Complex Manifolds: Topology,
		Geometry and holomorphic dynamics n.2017JZ2SW5 and by GNSAGA of INdAM}

	\begin{abstract}  
In this note, we propose some open problems and questions about bounded convex domains in $\C^N$, specifically about visibility and iteration theory.	
 \end{abstract}

	\subjclass[2020]{30C35; 30D05; 30D40; 32F45.}
	
	\keywords{Iteration theory, Denjoy-Wolff theorems, Kobayashi distance, Kobayashi-Royden pseudometric, visibility}
	
	\maketitle
	
\section{Introduction}

Iteration theory in complex domains is a very classical subject of study, dating back to the pioneering papers of Denjoy \cite{Den} and Wolff \cite{Wol1, Wol2}. (See also \cite{AbateTaut, BCDM, AbateNewbook} for further references.) Even for holomorphic self-maps of the unit disc, many open problems and new interesting results emerge annually. Moreover, in higher dimensions, even more open questions remain. 

In this note, we present open questions, a few conjectures, and some new results about iteration theory in bounded convex domains. The reason for staying within this class is that it exhibits peculiar properties, primarily discovered by L. Lempert \cite{Lem} (see also \cite{JP, AbateTaut}), which provide an incredible array of examples and results. 

In the past ten years, significant efforts have been dedicated to applying Gromov hyperbolicity theory in several complex variables, leading to the discovery of new and intriguing results. A. Karlsson \cite{Kar} proved that the Denjoy-Wolff Theorem holds in Gromov hyperbolic spaces for the Gromov topology. G. Bharali and A. Zimmer \cite{BZ, BZ2}, and later N. Nikolov, P. Thomas and the first author \cite{BNT}, identified a distinctive feature of Gromov hyperbolic spaces known as {\sl visibility},  which is related to the ``bending inside'' of geodesics. Visibility has far-reaching implications. For instance, G. Bharali and A. Maitra \cite{BM} showed that visibility, even in the absence of Gromov hyperbolicity, implies the Denjoy-Wolff Theorem. However, as proven in \cite{BB}, visibility is a sufficient condition for the Denjoy-Wolff theorem, but it is not  necessary in general.  

For bounded convex domains in $\C^N$, the known results about visibility and the Denjoy-Wolff theorem suggest that these two properties are equivalent. This note aims to provide a tour of the relationship between visibility and the Denjoy-Wolff theorem, reformulating the notions of visibility, Bharali-Maitra's theorem, and key questions and conjectures that could lead to proving their equivalence for bounded convex domains (regardless of any regularity conditions).

\medbreak

We wish to thank the anonymous referee for several comments which improved the readability of the manuscript.

\section{Notations}

The Kobayashi distance of a bounded convex domain $\Omega\subset\C^N$ is denoted by $k_\Omega$, while the Kobayashi-Royden infinitesimal (pseudo)metric by $\kappa_\Omega$. We refer to \cite{JP, AbateTaut, Lem} for properties and details. In particular, $k_\Omega$ is a complete distance on $\Omega$ which defines the Euclidean topology of $\Omega$ and it is contracted by holomorphic maps.

A {\sl complex geodesic} $\varphi: \D\to \Omega$ is a holomorphic map such that it is an isometry between $k_\D$ and $k_\Omega$. For every $z,w\in \Omega$ there exists a complex geodesic such that $z,w\in \varphi(\D)$. Moreover, given a complex geodesic $\varphi:\D\to\Omega$ there exists a holomorphic map $\rho:\Omega\to\D$ such that $\rho \circ \varphi={\sf id}_\D$. Such a map is called a {\sl left-inverse} of $\varphi$. 

A {\sl geodesic segment} $\gamma:[0,1]\to \Omega$ is an absolutely continuous curve such that for all $0\leq s\leq t\leq 1$
\[
\int_s^t \kappa_\Omega(\gamma(r); \gamma'(r))dr=k_\Omega(\gamma(s), \gamma(t)).
\]
The interval $[0,1]$ can be replaced by any segment in $\mathbb{R}$. However, in this paper, unless otherwise specified, when referring to a geodesic segment, we always assume it is defined in the interval $[0,1]$.

A {\sl geodesic ray} $\gamma:[0,+\infty)\to \Omega$ is an absolutely continuous curve such that $\gamma|_{[0,t]}$ is a geodesic segment for all $t\in [0,+\infty)$ and $\lim_{t\to+\infty}k_\Omega(\gamma(0), \gamma(t))=+\infty$.  The choice of interval $[0,+\infty)$ can be replaced by any interval $[a, b)$ with $a<b$, but unless otherwise specified, when referring to a geodesic ray, we always assume its interval of definition is $[0,+\infty)$.

Note that if $\varphi:\D\to \Omega$ is a complex geodesic, then $\varphi$ maps geodesic segments or rays of $\D$ onto geodesic segments or rays of $\Omega$. Therefore, for every pair of distinct points $z,w\in\Omega$, there exists a geodesic segment joining $z$ and $w$. This follows also directly from the Hopf-Rinow theorem since $k_\Omega$ is complete.

\section{Visibility and complex geodesics}

G. Bharali and A. Zimmer introduced the concept of visibility for, non-necessarily complete, hyperbolic domains in \cite{BZ} involving almost-geodesics as defined by them. For bounded convex domains, since any two points are joined by a geodesic segment, the paper \cite{BNT} introduced a simpler notion of visibility. Note that by the works in \cite{BNT} and \cite{CMS}, the visibility notion introduced in \cite{BNT} is equivalent to a weaker form of the notion introduced by Bharali and Zimmer. It remains an open question whether, in general, those two definitions are equivalent.

Since we are working with bounded convex domains, we adopt a refined version of the definition given in \cite{BNT}.

\begin{definition}
Let $\Omega\subset\C^N$ be a  bounded convex domain. Let $p, q\in \partial \Omega$, $p\neq q$. We say that $p$ and $q$ are {\sl essentially visible} if there exists a compact set $K\subset\subset \Omega$ such that for every sequences $\{z_k\}, \{w_k\}\subset\Omega$ so that $\{z_k\}$ converges to $p$ and $\{w_k\}$ converges to $q$ there exists a sequence of geodesic segments $\{\gamma_k\}$ such that $\gamma_k(0)=z_k$, $\gamma_k(1)=w_k$ and $\gamma_k(s_k)\in K$ for some $s_k\in (0,1)$ and for all $k$.

We say that $p$ and $q$ are {\sl strongly visible} if there exists a compact set $K\subset\subset \Omega$ so that for every sequence of geodesic segments $\{\gamma_k\}$ such that $\{\gamma_k(0)\}$ converges to $p$ and  $\{\gamma_k(1)\}$ converges to $q$ there is $s_k\in (0,1)$ such that $\gamma_k(s_k)\in K$  for all $k$ sufficiently large. 

We say that $q$ is {\sl non-visible} with respect to $p$ (or, equivalently, $p$ is {\sl non-visible} with respect to $q$) if $p$ and $q$ are not essentially visible.

If every $p, q\in \partial \Omega$, $p\neq q$ are essentially visible, we say that $\Omega$ is {\sl essentially visible}. While, if  every $p, q\in \partial \Omega$, $p\neq q$ are strongly visible, we say that $\Omega$ is {\sl  visible}. 
\end{definition}

Clearly, strongly visible pairs of points are essentially visible. The following example shows that the converse does not hold in general. 
 
 \begin{example}\label{example:bidisc}
 	Let $\Omega:=\mathbb D \times \mathbb D$ be the bidisc in $\mathbb C^2$, let $r_k \in (0,1)$ be a sequence increasing to one and set $z_k:=(-r_k,0),w_k:=(r_k,0)$. Since $k_\Omega((z_1,z_2),(w_1,w_2))=\max\{k_{\mathbb{D}}(z_1,w_1),k_{\mathbb{D}}(z_2,w_2)\}$ and $\kappa_\Omega((z_1,z_2);(v_1,v_2))=\max\{\kappa_{\mathbb{D}}(z_1;v_1),\kappa_{\mathbb{D}}(z_2;v_2)\}$ it follows that the curves $\gamma_k:[0,1]\to\Omega$ defined by $$
 	\gamma_k(t) = 
 	\begin{cases}
 		\left(\dfrac{r_k(2t-1)}{1-2r^2_k t},2tr_k\right) & \text{if} \:\:\:t\in[0,1/2], \\
 		\left(\dfrac{r_k(2t-1)}{1+r^2_k(2t-2)},(2-2t)r_k\right) &  \text{otherwise}  
 	\end{cases}
 	$$ are geodesic segments joining $z_k$ to $w_k$. As $r_k$ tends to one $\gamma_k([0,1])$ eventually avoids any compact set in $\Omega$. Consequently, it follows that the pair $(-1,0)$ and $(1,0)$ in $\partial\Omega$ are not strongly visible. 
 	
 	Denote by $\pi_1:\mathbb C^2 \to \mathbb C$, $\pi_2:\mathbb C^2 \to \mathbb C$ projections to first and second coordinates respectively. Observe that if $z_k,w_k \in \Omega$ tend to $(-1,0)$ and $(1,0)$ respectively, and $\gamma_k:[0,1]\to\Omega$ are geodesic segments such that $\gamma_k(0)=z_k$ and $\gamma_k(1)=w_k$, the visibility of the unit disc shows that $\pi_2\circ\gamma_k([0,1])$ meet some compact set in $\mathbb D$. As $\pi_1(z_k),\pi_1(w_k)$ are compactly contained in $\mathbb D$, it is not difficult to conclude that $z_k$ and $w_k$ can be joined by geodesic segments passing through a fixed compact set in $\Omega$. We conclude that $(-1,0)$ and $(1,0)$ are essentially visible.
 \end{example}

The situation in the bidisc leads us to ask the following question.

\begin{question}
	Does there exist a bounded convex domain $\Omega\subset\C^N$ with no non-trivial analytic discs in the boundary which is essentially visible but not visible? Does there exist a smoothly bounded convex domain $\Omega\subset\C^N$ which is essentially visible but not visible? 
\end{question}

Bounded convex domains which are Gromov hyperbolic with respect to the Kobayashi distance are visible \cite{BGZ}. In particular, smoothly bounded convex finite type domains are visible  \cite{Zi1, Zi2}. However visibility is  weaker than Gromov hyperbolicity: indeed, smoothly bounded convex domains for which all, but a finite number of points, are of finite type  are visible \cite[Theorem 1.1]{BNT} but they are not Gromov hyperbolic with respect to the Kobayashi distance. In fact, there are examples of bounded convex domains containing a complex tangential line segment of infinite type points that  are visible \cite[Section~5]{BNT}.

Therefore it seems very difficult to find a sharp characterisation of visibility for convex domains in terms of the Euclidean geometry of their boundaries. Nevertheless, we can attempt to understand the possible cluster sets of ``non-visible' geodesic segments. To be precise, we need a definition.

\begin{definition}
Let $\Omega\subset\C^N$ be a  bounded convex domain. Let $\{\gamma_k\}$ be a sequence of geodesic segments. We say that $\{\gamma_k\}$ is a sequence of {\sl non-visible geodesic segments} if 
\begin{enumerate}
\item for every compact set $K\subset\subset \Omega$ there exists $k_0$ such that $\gamma_k([0,1])\cap K=\emptyset$ for all $k\geq k_0$,
\item there exist $p, q\in \partial\Omega$, $p\neq q$ such that $\lim_{k\to \infty}\gamma_k(0)=p$ and $\lim_{k\to \infty}\gamma_k(1)=q$.
\end{enumerate}
\end{definition}

We have the following

\begin{conjecture}\label{Conj1}
Let $\Omega\subset\C^N$ be a  bounded convex domain. Let $\{\gamma_k\}$ be a sequence of  non-visible geodesic segments and let 
\[
\Gamma(\{\gamma_k\}):=\{p\in\partial\Omega: \exists \{t_{k_n}\}\subset [0,1]: \lim_{n\to \infty}\gamma_{k_n}(t_{k_n})=p\}.
\] 
If $p, q\in \Gamma(\{\gamma_k\})$ and $p\neq q$ then
\begin{itemize}
\item either $\{tp+(1-t)q, 0<t<1\}\subset\Omega$,
\item or, $\{tp+(1-t)q, 0\leq t\leq 1\}\subset\partial\Omega$ and $(\C(p-q)+q)\cap \Omega=\emptyset$.
\end{itemize}
\end{conjecture}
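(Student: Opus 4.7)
\emph{Reduction.} By convexity of $\bar\Omega$ the real segment $[p,q]$ lies in $\bar\Omega$; a standard argument (combining any interior point of the segment that lies in $\Omega$ with $p$ and $q$ in further convex combinations) gives the elementary dichotomy: either the open segment $\{(1-t)p+tq : 0 < t < 1\}$ lies in $\Omega$, or the whole closed segment lies in $\partial\Omega$. Hence the genuine content of the conjecture is the second bullet, and by Hahn--Banach, together with the identification of real supporting functionals on $\bar\Omega$ with real parts of $\C$-linear functionals, it reduces to producing a $\C$-linear $L:\C^N\to\C$ with $L(p)=L(q)$ and $\Re L < \Re L(p)$ on $\Omega$: the complex hyperplane $\{L=L(p)\}$ then contains the complex line $q+\C(p-q)$ and avoids $\Omega$.

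\emph{Construction of $L$ via complex geodesics.} After passing to subsequences, choose $t_k,s_k\in[0,1]$ with $\gamma_k(t_k)\to p$ and $\gamma_k(s_k)\to q$. By Lempert's theorem, for each $k$ one has a complex geodesic $\varphi_k:\D\to\Omega$ and $r_k\in(0,1)$ with $\varphi_k(-r_k)=\gamma_k(t_k)$ and $\varphi_k(r_k)=\gamma_k(s_k)$; by Montel's theorem a subsequential limit $\varphi_\infty:\D\to\bar\Omega$ exists. Applying the maximum principle to $L\circ\varphi_\infty$, for any $\C$-linear functional $L$ supporting $\Omega$ at a boundary point of the image, yields the dichotomy: either (A) $\varphi_\infty(\D)\subset\Omega$, in which case $\varphi_\infty$ is itself a complex geodesic of $\Omega$; or (B) $\varphi_\infty(\D)\subset\partial\Omega$ and is contained in a complex supporting affine hyperplane $\{L=c\}$ of $\Omega$. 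In case (B), $L$ is the sought-after functional as soon as one knows $L(p)=L(q)=c$, which in turn should follow from a boundary-value argument for the bounded holomorphic functions $L\circ\varphi_k$, exploiting that $\varphi_k(\pm r_k)\to p,q$ and that complex geodesics in bounded convex domains extend continuously to $\bar\D$ by Lempert's theory.

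\emph{Main obstacle.} The principal difficulty is to show that case (A) cannot coexist with the assumption $[p,q]\subset\partial\Omega$: equivalently, that whenever the auxiliary complex geodesics $\varphi_k$ do not themselves degenerate to $\partial\Omega$, the open segment $(p,q)$ must already meet $\Omega$. This is delicate because, in non-strictly convex domains, complex geodesics between two given points are not unique; Example~2.2 explicitly exhibits a non-visible sequence $\gamma_k$ coexisting with visible complex geodesics between the same endpoints, so the non-visibility hypothesis on $\gamma_k$ does not automatically transfer to $\varphi_k$. Closing this gap will likely require using the non-visibility of $\gamma_k$ more intrinsically --- for instance by studying the left-inverses $\rho_k:\Omega\to\D$ of $\varphi_k$ along the curve $\gamma_k$, or by tailoring the choice of $\varphi_k$ so that its midpoint tracks $\gamma_k((t_k+s_k)/2)$ --- in order to force the limit $\varphi_\infty$ into $\partial\Omega$ whenever $\gamma_k$ is non-visible, thereby reducing to case (B) and completing the proof.
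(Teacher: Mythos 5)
This statement is Conjecture~\ref{Conj1}: it is posed in the paper as an open problem and the paper contains no proof of it. The only evidence the authors offer is that it holds (with only the second alternative occurring) when $\partial\Omega$ is $\mathcal C^{1+\varepsilon}$-smooth, as a consequence of \cite[Theorem 4.1]{Zi0} and \cite[Theorem 11]{Okt}. Your submission is, by your own admission, not a proof: the ``Main obstacle'' paragraph is precisely the open content of the conjecture. When $[p,q]\subset\partial\Omega$ you must rule out your case (A), i.e.\ show that the auxiliary complex geodesics $\varphi_k$ --- or some other holomorphic data extracted from the non-visible segments $\gamma_k$ --- degenerate to $\partial\Omega$ in a way that produces a single $\C$-linear functional constant on all of $[p,q]$. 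Nothing in the sketch accomplishes this, and, as you correctly observe via the bidisc, non-visibility of the $\gamma_k$ does not transfer to complex geodesics through the same pairs of points.

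Two further concrete problems. First, the reduction is right but should be stated sharply: a real supporting functional along $[p,q]\subset\partial\Omega$ gives $\Re L(p)=\Re L(q)$ for free, and the entire content of the conjecture is the complex tangency $L(p)=L(q)$, i.e.\ that the supporting hyperplane can be taken complex and containing the direction $p-q$; your Hahn--Banach paragraph does not produce this. Second, in case (B) the claim that $L(p)=L(q)=c$ follows ``from a boundary-value argument \dots exploiting that complex geodesics in bounded convex domains extend continuously to $\overline{\D}$ by Lempert's theory'' is unjustified: continuous extension up to $\overline{\D}$ is not part of Lempert's theory for general bounded convex domains; in this paper it is obtained as a consequence of visibility (via Nikolov's theorem), and Question~\ref{Q:continuity} asks precisely whether it can hold without visibility, so invoking it in a statement about non-visible configurations is circular. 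Moreover, even granting locally uniform convergence $\varphi_k\to\varphi_\infty$, if $k_\Omega(\gamma_k(t_k),\gamma_k(s_k))\to+\infty$ then $r_k\to 1$, the points $\varphi_k(\pm r_k)$ leave every compact subset of $\D$, and their limits $p,q$ need not lie in $\overline{\varphi_\infty(\D)}\subset\{L=c\}$. The statement remains open.
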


The second condition in the previous Conjecture means that the real segment $\{tp+(1-t)q, 0\leq t\leq 1\}$ belongs to a complex supporting hyperplane of $\Omega$, or, in other terms, it is ``complex-tangential''.

The (Euclidean) geometry of the bidisc suggest that the above conjecture trivially holds for the bidisc. Furthermore, in \cite[Section 5]{BNT} and \cite[Section 5]{Okt} there are examples of smooth convex non-visible domains with no analytic discs in the boundary. For such domains, the limit set $\Gamma(\{\gamma_k\})$ of non-visible geodesic segments $\gamma_k$ lie in a real line segment contained in the intersection of a complex hyperplane with the boundary.

Actually, the examples mentioned above are not coincidental. As a consequence of \cite[Theorem 4.1]{Zi0} if $\Omega\subset\mathbb C^N$ is a bounded convex domain with $\mathcal C^{1+\varepsilon}$-smooth boundary, it is shown in \cite[Theorem 11]{Okt} that if $\gamma_k:[0,1]\to\Omega$ is a sequence of geodesic segments such that $\gamma_k(0)$ tends to $p\in\partial\Omega$ and $\gamma_k([0,1])$ tends to $\partial\Omega$, then $\gamma_k([0,1])$ tends to the affine complex tangent hyperplane to $\partial\Omega$ at $p$, that is $\Gamma(\{\gamma_k\})\subset T^{\mathbb C}_p \partial \Omega \cap \partial \Omega$. Consequently the above conjecture is true (and only the second case can occur) under the hypothesis of $\mathcal C^{1+\epsilon}$-smoothness of $\partial\Omega$.

If Conjecture~\ref{Conj1} is true, then every bounded convex domain $\Omega$ with a strictly $\C$-linearly convex boundary (that is, every complex affine line $L$ which does not intersect $\Omega$  has the property that $L\cap \partial \Omega$ contains at most one point) is visible. Actually, it is known that  bounded (local) strictly $\C$-linearly convex domains with Dini-smooth boundaries  are visible \cite[Theorem 1.3]{BNT}.

An interesting characterization of visibility in terms of complex geodesics is provided by the following result of N. Nikolov (\cite[Prop.~5]{Nik}). Let $\Omega\subset\C^N$ be a  bounded convex domain. Let $\mathcal F_0$ be the family of all complex geodesics $\varphi$ of $\Omega$ parametrized so that $\delta_\Omega(\varphi(0))=\max_{\zeta\in\D}\delta_\Omega(\varphi(\zeta))$, where, as usual, $\delta_\Omega(z)$ is the distance of $z\in \Omega$ from $\partial \Omega$.

\begin{theorem}[Nikolov]
Let $\Omega\subset\C^N$ be a  bounded convex domain. Then $\Omega$ is visible if and only if
\begin{enumerate}
\item $\partial\Omega$ does not contain non-trivial analytic discs,
\item $\mathcal F_0$ is uniformly equicontinuous (as family of functions from $\D$ endowed with the Euclidean topology to $\C^N$ endowed with the Euclidean topology).
\end{enumerate}
\end{theorem}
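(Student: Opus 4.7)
The plan is to prove both implications by means of an Arzel\`a--Ascoli compactness argument applied to $\mathcal F_0$, combined with the Lempert--Royden fact that every geodesic segment of a bounded convex domain is contained in the image of a complex geodesic.

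For $(1)+(2)\Rightarrow$ visibility, I fix $p\neq q\in\partial\Omega$, take a sequence of geodesic segments $\gamma_k$ with $\gamma_k(0)\to p$, $\gamma_k(1)\to q$, and write $\gamma_k(t)=\varphi_k(\alpha_k(t))$ with $\varphi_k\in\mathcal F_0$ (after a M\"obius reparametrization) and $\alpha_k:[0,1]\to\D$ parametrizing the real hyperbolic segment from $a_k=\alpha_k(0)$ to $b_k=\alpha_k(1)$. By hypothesis (2) and the uniform bound $\varphi_k(\D)\subset\Omega$, the family $\{\varphi_k\}$ is relatively compact in $C(\overline\D,\overline\Omega)$ by Arzel\`a--Ascoli; passing to a subsequence, $\varphi_k\to\varphi$ uniformly on $\overline\D$ with $a_k\to a$, $b_k\to b$ and $\varphi(a)=p\neq q=\varphi(b)$, so $\varphi$ is non-constant. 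Applying the maximum principle to $\operatorname{Re}L\circ\varphi$ for a complex supporting linear functional $L$ at any hypothetical point of $\varphi(\D)\cap\partial\Omega$ gives the dichotomy $\varphi(\D)\subset\Omega$ or $\varphi(\D)\subset\partial\Omega$; (1) rules out the latter, so $\varphi$ is itself a complex geodesic, $a,b\in\partial\D$ are distinct, and the hyperbolic geodesic in $\D$ from $a$ to $b$ meets some fixed compact $K'\subset\subset\D$. For large $k$ the arc $\alpha_k$ meets $K'$ at some $\alpha_k(s_k)$, and uniform convergence places $\gamma_k(s_k)=\varphi_k(\alpha_k(s_k))$ eventually in a compact neighbourhood of $\varphi(K')\subset\subset\Omega$.

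For the converse, condition (1) follows from a standard construction: for a non-constant $\psi:\D\to\partial\Omega$, the inward convex combinations $\psi_\varepsilon:=(1-\varepsilon)\psi+\varepsilon z_0$ with $z_0\in\Omega$ fixed map $\D$ into $\Omega$, so any geodesic segment between $\psi_\varepsilon(0)$ and $\psi_\varepsilon(1/2)$ has Kobayashi length $\le k_\D(0,1/2)$, while its endpoints approach the distinct boundary points $\psi(0),\psi(1/2)$, and $k_\Omega$-completeness forbids it from meeting any fixed compact subset of $\Omega$. For (2), if $\mathcal F_0$ fails uniform equicontinuity I extract $\varphi_k\in\mathcal F_0$ and $\zeta_k,\eta_k\to\zeta_\infty\in\overline\D$ with $|\zeta_k-\eta_k|\to0$ but $|\varphi_k(\zeta_k)-\varphi_k(\eta_k)|\ge\varepsilon_0$. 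If $\zeta_\infty\in\partial\D$, I form the geodesic segment obtained by restricting $\varphi_k$ to the hyperbolic segment between $\zeta_k$ and $\eta_k$, whose endpoints converge to distinct $p,q\in\partial\Omega$; visibility yields $\varphi_k(\xi_k)\in K$ for some fixed compact $K\subset\subset\Omega$ and $\xi_k$ on that segment, and the $\mathcal F_0$-normalization forces $\delta_\Omega(\varphi_k(0))\ge\inf_K\delta_\Omega>0$. But $\xi_k\to\zeta_\infty\in\partial\D$ gives $k_\D(0,\xi_k)=k_\Omega(\varphi_k(0),\varphi_k(\xi_k))\to\infty$, contradicting the boundedness of $k_\Omega$ on the fixed compact set $\{z\in\overline\Omega:\delta_\Omega(z)\ge\inf_K\delta_\Omega\}$. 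If instead $\zeta_\infty\in\D$, the isometry property forces $k_\Omega(\varphi_k(\zeta_k),\varphi_k(\eta_k))=k_\D(\zeta_k,\eta_k)\to0$, and I split into subcases according to whether the endpoints stay in a compact of $\Omega$ or escape, reaching a contradiction with the gap $\varepsilon_0$ via completeness and the shrinking of Kobayashi balls near $\partial\Omega$.

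The main obstacle is the forward-direction analysis of the interior case $\zeta_\infty\in\D$: one must contradict $k_\Omega\to0$ against a persistent Euclidean gap $\varepsilon_0$, which is delicate when both $\varphi_k(\zeta_k)$ and $\varphi_k(\eta_k)$ may escape to $\partial\Omega$ and requires the convexity-based fact that Kobayashi balls of fixed radius in a bounded convex domain have Euclidean diameter vanishing as the centre approaches $\partial\Omega$. Apart from this ingredient, the $\mathcal F_0$-normalization, the supporting-hyperplane dichotomy, and the Lempert--Royden factorization are standard and do the remaining work of translating between visibility, boundary behaviour of complex geodesics, and the absence of analytic discs in $\partial\Omega$.
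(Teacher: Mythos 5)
The paper itself gives no proof of this statement---it is quoted from Nikolov \cite[Prop.~5]{Nik}---so your attempt can only be judged on its own merits, and it has a genuine gap in the forward direction. Your entire argument for $(1)+(2)\Rightarrow$ visibility rests on the assertion that every geodesic segment of a bounded convex domain is contained in the image of a complex geodesic, and this is false. Example~\ref{example:bidisc} of this very paper exhibits geodesic segments $\gamma_k$ in the bidisc whose image is a graph $\{(x,g(x))\}$ over a real interval in the first coordinate with $g$ a piecewise-linear ``tent''; any complex geodesic of $\D^2$ through the two endpoints is a graph $\zeta\mapsto(\zeta,H(\zeta))$ with $H$ holomorphic, and no holomorphic $H$ restricts to the non-real-analytic $g$, so these geodesic segments lie in no analytic disc. (The bidisc violates hypothesis (1), but nothing in your sketch shows that (1) and (2) force every Kobayashi geodesic into a complex geodesic; in a general convex domain geodesic segments between two points are highly non-unique and only one of them is produced by Lempert's theorem.) Since ``visible'' here means \emph{strong} visibility---\emph{every} sequence of geodesic segments with endpoints tending to $p\neq q$ must meet a fixed compact set---your factorization $\gamma_k=\varphi_k\circ\alpha_k$ treats only the special geodesics lying inside complex geodesics, i.e.\ it proves at best complex or essential visibility. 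Handling arbitrary geodesic segments is precisely the crux of the proposition; the standard device is to compose a general geodesic segment with a left inverse $\rho_k$ of the Lempert geodesic $\varphi_k$ through its endpoints, note that the identity $k_\D(\rho_k(\gamma_k(t)),\rho_k(z_k))+k_\D(\rho_k(\gamma_k(t)),\rho_k(w_k))=k_\D(\rho_k(z_k),\rho_k(w_k))$ places $\rho_k\circ\gamma_k$ exactly on a hyperbolic geodesic of $\D$, and then use equicontinuity and (1) to compare $\gamma_k$ with $\varphi_k\circ\rho_k\circ\gamma_k$. None of this is in your proposal. The Arzel\`a--Ascoli step, the supporting-hyperplane dichotomy, and the use of (1) to exclude boundary discs are fine as far as they go.

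In the converse direction your argument for (1) is correct and the scheme for (2) is workable, but one key justification is wrong as stated: it is \emph{not} a ``convexity-based fact'' that Kobayashi balls of fixed radius have Euclidean diameter tending to $0$ as the centre approaches the boundary---the bidisc is again a counterexample, since the ball of radius $r$ centred at $(0,1-\varepsilon)$ contains $\{|z_1|<\tanh r\}\times\{1-\varepsilon\}$. What saves the step is that in this direction you are \emph{assuming} visibility, and visibility does yield the shrinking-ball property (a geodesic segment of bounded length cannot meet a fixed compact set once one endpoint escapes to $\partial\Omega$, so its endpoints cannot converge to two distinct boundary points); you should derive it from that hypothesis rather than from convexity. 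You should also justify, in the case $\zeta_\infty\in\partial\D$, that $\varphi_k(\zeta_k)$ and $\varphi_k(\eta_k)$ actually converge to points of $\partial\Omega$: this follows from the $\mathcal F_0$-normalization (either $\delta_\Omega(\varphi_k(0))\to 0$, which pushes the whole image to the boundary, or $\varphi_k(0)$ stays in a compact subset of $\Omega$ and completeness applies to $k_\D(0,\zeta_k)\to\infty$), but it is not automatic.
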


Is it not clear whether condition (2) in the previous theorem implies visibility by itself, but presumably it does. 

Note that in particular, the previous theorem implies that if $\Omega$ is visible then every complex geodesic $\varphi:\D\to \Omega$ extend continuously up to $\overline{\D}$. 

In \cite{BNT} it has been proved that if $D\subsetneq \C$ is a bounded simply connected domain then $D$ is visible if and only if $\partial D$ is locally connected. In particular, by Carath\'eodory's theorem (see, {\sl e.g.}, \cite{BCDM}) for a simply connected domain a Riemann map extends continuously up to the boundary if and only if the domain is visible. It seems interesting to answer the following question:

\begin{question}\label{Q:continuity}
Does there exist a non-visible bounded convex domain $\Omega$ such that every complex geodesic $\varphi:\D\to\Omega$ extends continuously up to $\partial\D$? In other words, is the continuous extension of all complex geodesics up to the boundary  sufficient for visibility?

\end{question}

Also, Nikolov's Theorem hints to the following notion of visibility:

\begin{definition}
Let $\Omega\subset\C^N$ be a  bounded convex domain. We say that $\Omega$ is {\sl complex visible} if for every $p, q\in \partial\Omega$ such that $p\neq q$ there exist a compact set $K\subset\subset \Omega$ such that for every sequences $\{z_k\}, \{w_k\}\subset\Omega$ so that $\{z_k\}$ converges to $p$ and $\{w_k\}$ converges to $q$ every sequence of complex geodesics $\{\varphi_k\}$ such that $z_k, w_k\in \varphi_k(\D)$ has the property that $\varphi_k(\D)\cap K\neq\emptyset$ for large enough $k$.
\end{definition}

It is clear that visibility implies complex visibility.

\begin{conjecture}
Let $\Omega\subset\C^N$ be a  bounded convex domain. Then $\Omega$ is visible if and only if it is complex visible.
\end{conjecture}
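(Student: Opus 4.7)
The plan is as follows. The direction visibility $\Rightarrow$ complex visibility is immediate: by Lempert's theorem, if $z_k, w_k \in \varphi_k(\D)$ then the Kobayashi geodesic segment from $z_k$ to $w_k$ is the $\varphi_k$-image of the real hyperbolic segment from $\varphi_k^{-1}(z_k)$ to $\varphi_k^{-1}(w_k)$, hence is contained in $\varphi_k(\D)$. Visibility applied to these geodesic segments produces a compact $K \subset\subset \O$ which they all meet, and \emph{a fortiori} $\varphi_k(\D) \cap K \neq \emptyset$.

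For the converse I would argue by contradiction. Assume $\O$ is complex visible but not visible at some distinct $p, q \in \bd \O$, so there exist $z_k \to p$, $w_k \to q$ and geodesic segments $\gamma_k$ from $z_k$ to $w_k$ eventually escaping every compact of $\O$. Embed each $\gamma_k$ in a complex geodesic $\varphi_k$ and reparametrise so $\varphi_k \in \mathcal F_0$. Complex visibility furnishes a compact $K_0 \subset\subset \O$ with $\varphi_k(\D) \cap K_0 \neq \emptyset$, and by the defining property of $\mathcal F_0$ this forces $\delta_\O(\varphi_k(0)) \geq \mathrm{dist}(K_0, \bd\O) > 0$. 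Hence $\{\varphi_k(0)\}$ is relatively compact in $\O$; Montel's theorem together with the convexity maximum principle (any holomorphic limit that hits an interior point of $\O$ has image in $\O$) extracts a subsequence converging locally uniformly to a complex geodesic $\varphi : \D \to \O$.

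Set $\alpha_k := \varphi_k^{-1}(z_k)$, $\beta_k := \varphi_k^{-1}(w_k)$. Since $k_\O$ is complete and $\varphi_k(0)$ stays in a compact of $\O$, $k_\D(0, \alpha_k) = k_\O(\varphi_k(0), z_k) \to \infty$ and similarly for $\beta_k$, so $\alpha_k, \beta_k \to \bd\D$; after a further subsequence $\alpha_k \to \alpha$, $\beta_k \to \beta$ in $\bd\D$. When $\alpha \neq \beta$, the hyperbolic arcs $[\alpha_k, \beta_k]_{\mathrm{geo}}$ (circles orthogonal to $\bd\D$) converge in Hausdorff distance on $\overline{\D}$ to the arc $[\alpha, \beta]_{\mathrm{geo}}$, whose interior lies in $\D$. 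Picking $\zeta^*$ in that interior and $\zeta_k^* \in [\alpha_k, \beta_k]_{\mathrm{geo}}$ with $\zeta_k^* \to \zeta^*$, one obtains $\varphi_k(\zeta_k^*) \to \varphi(\zeta^*) \in \O$; since $\varphi_k(\zeta_k^*) \in \gamma_k$, the segment $\gamma_k$ must eventually lie in a fixed compact neighbourhood of $\varphi(\zeta^*)$, contradicting escape.

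The hard part will be the remaining case $\alpha = \beta$. Here $k_\D(\alpha_k, \beta_k) = k_\O(z_k, w_k) \to \infty$, so $\alpha_k, \beta_k$ approach a common point of $\bd\D$ along different asymptotic directions, and $\varphi$ would have to cluster at $\alpha$ to both $p$ and $q$ — i.e., fail to extend continuously at $\alpha$. Ruling this out is precisely in the spirit of Question~\ref{Q:continuity}. One attempt I would make is to exploit the neighbourhoods $U$, $V$ permitted in the definition of complex visibility to construct from such a ``spreading'' $\varphi$ a new sequence $\tilde\varphi_k$ of complex geodesics through pairs $\tilde z_k \to p$, $\tilde w_k \to q$ whose images avoid every fixed compact — by reparametrising $\varphi$ itself so as to push the two cluster preimages of $\alpha$ deeper into $\D$ — which would contradict complex visibility. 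A second, possibly more robust, route is through Nikolov's theorem: one would aim to prove directly that complex visibility excludes non-trivial analytic discs in $\bd\O$ and implies uniform equicontinuity of $\mathcal F_0$, whence visibility follows.
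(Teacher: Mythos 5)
This statement is posed in the paper as an open conjecture; the paper offers no proof of it (only the remark that visibility clearly implies complex visibility), so your attempt cannot be measured against an argument of the authors --- it must stand on its own. Your forward direction is correct and is exactly the ``clear'' implication: the hyperbolic segment joining $\varphi_k^{-1}(z_k)$ and $\varphi_k^{-1}(w_k)$ is mapped by $\varphi_k$ onto a geodesic segment of $\Omega$ joining $z_k$ and $w_k$, and strong visibility of the pair forces it (hence $\varphi_k(\D)$) to meet a fixed compact set.

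The converse, however, breaks at its very first step: you cannot ``embed each $\gamma_k$ in a complex geodesic $\varphi_k$.'' Geodesic segments between two points of a convex domain are in general highly non-unique, and the ones witnessing the failure of strong visibility need not lie in the image of any complex geodesic. Example~\ref{example:bidisc} makes this concrete: the escaping geodesics $\gamma_k$ in the bidisc have second coordinate equal to a tent function of the parameter while the first coordinate moves monotonically, so $\gamma_k([0,1])$ is the graph over a real interval of a function with a corner and cannot sit inside $\{(\zeta,\phi(\zeta))\}$ for any holomorphic $\phi$ --- yet every complex geodesic through $z_k$ and $w_k$ does meet a fixed compact set. Complex visibility only constrains the (well-behaved) geodesic segments that live inside complex geodesic discs, whereas visibility must control \emph{all} geodesic segments; closing that gap is essentially the whole content of the conjecture, and your argument assumes it away. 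Beyond this, even if the embedding were granted, your case $\alpha=\beta$ is not an afterthought but the second genuinely open obstruction: it amounts to ruling out a complex geodesic whose cluster set at a single boundary point of $\D$ contains both $p$ and $q$, i.e.\ precisely the failure of continuous extension raised in Question~\ref{Q:continuity}, which the paper also leaves open. The two routes you sketch for it (reparametrising $\varphi$ to contradict complex visibility, or deriving Nikolov's conditions directly from complex visibility) are reasonable directions, but neither is carried out, so the attempt does not constitute a proof.
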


Let $\Omega\subset\C^N$ be a  bounded convex domain. Let $z_0\in \Omega$ and let $\mathcal G_{z_0}$ be the family of all geodesic segments and geodesic rays starting from $z_0$, parameterized in  hyperbolic arc-length\footnote{Here we say that a geodesic segment $\gamma:[0,R]\to \Omega$, $R>0$, starts from $z_0$ and it is parametrized in hyperbolic arc-length if $\gamma(0)=z_0$ and if $\kappa_\Omega(\gamma(s); \gamma'(s))=1$ for almost all $s\in [0,R]$. Similarly for a geodesic ray, taking $R=+\infty$.}.

From \cite[Lemma~3.1]{Br} it follows that if $\Omega$ is a visible bounded convex domain then $\mathcal G_{z_0}$ is uniformly equicontinuous. In particular, all geodesic rays {\sl land} (that is, if $\gamma:[0,+\infty)\to\Omega$ is a geodesic ray in $\Omega$, then $\lim_{t\to\infty}\gamma(t)$ exists). 

\begin{question}
Does there exist a bounded convex domain $\Omega$ which is not visible and such that all geodesic rays land?
\end{question}

It is easy to construct examples of (not convex) simply connected domain in $\C$ which are not visible and for which all geodesic rays land (see, {\sl e.g.}, \cite{BB}): in fact, any bounded simply connected domain for which the principal part of any prime end is a singleton but there is a prime end whose impression is not a single point provides such an example. We would expect the answer to the previous question to be negative, as otherwise the landing of all geodesic rays would determine visibility. However, we do not know of any counterexample.

However, Nikolov's Theorem and the results in \cite{Br} make plausible the following:

\begin{conjecture}
Let $\Omega\subset\C^N$ be a  bounded convex domain. Then $\Omega$ is visible if and only if $\mathcal G_{z_0}$ is uniformly equicontinuous (as family of functions from $[0,+\infty)$ endowed with the Euclidean topology to $\C^N$ endowed with the Euclidean topology).
\end{conjecture}

\section{The Denjoy-Wolff property}

Let $\Omega\subset\C^N$ be a bounded convex domain. Let $F:\Omega\to \Omega$ be a holomorphic map.  The {\sl target set of $F$} is the set 
\[
T(F):=\{z\in \partial{\Omega}: \exists z_0 \in \Omega,  \{n_k\}\subset \mathbb N  \hbox{ such that }  n_k\to \infty \hbox{ and } F^{\circ n_k}(z_0) \to z \:\: \text{as} \:\: k \to \infty\}.
\]

Abate (see, \cite[Theorem~2.4.20]{AbateTaut}) proved that    $T(F)=\emptyset$ if and only if $F$ has fixed points in $\Omega$.

\begin{definition}\label{def:wolffdenjoyproperty}
We say that  $\Omega$ has  \emph{the Denjoy-Wolff property} if for any holomorphic self-map $F$ of $\Omega$ without fixed points,  there exists a point $p\in\partial\Omega$ such that  $T(F)=\{p\}$. 
\end{definition}

It is known since Herv\'e \cite{Her2}  that the unit ball $\mathbb B^N$ of $\mathbb C^N$ verifies the Denjoy-Wolff property. Abate in \cite{Aba, AbateTaut} extended the result of Herv\'e to strongly convex domains with $\mathcal C^2$-smooth boundaries. Later Budzy\'nska \cite{Bud} (see also \cite{AR}) removed the boundary regularity assumptions and extended this result to strictly convex domains, while in \cite{BGZ} the same result has been proved for bounded convex domains which are Gromov hyperbolic with respect to the Kobayashi distance, in particular, this happens for smoothly bounded finite type convex domains (see \cite{Zi1, Zi2}). See also \cite{BKR, CR} and references therein for relevant results about the Denjoy-Wolff theorem in complex Banach spaces and in  symmetric domains.

If $\partial \Omega$ does not have non-trivial analytic discs, the target set can be determined by the cluster set of a unique point of $\Omega$, namely

\begin{lemma}\label{Lem:TF-unique}
Let $\Omega\subset\C^N$ be a convex bounded domain. Assume $\partial \Omega$ does not have non-trivial analytic discs. Let $F:\Omega\to \Omega$ be a holomorphic self-map without fixed points. Let $z_0\in \Omega$. Then 
\[
T(F)=T(F, z_0):=\{q\in \partial \Omega: \exists  \{n_k\}\subset \mathbb N \:\: \text{such that} \:\: n_k\to \infty, F^{\circ n_k}(z_0) \to z \:\: \text{as} \:\: k \to \infty\}.
\]
\end{lemma}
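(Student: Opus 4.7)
\medskip

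The inclusion $T(F,z_0) \subseteq T(F)$ is immediate from the definitions. For the reverse inclusion, I would fix $q\in T(F)$ and produce $w_0\in\Omega$ and $n_k\to\infty$ with $F^{\circ n_k}(w_0)\to q$. By the theorem of Abate cited just above, $T(F)\subset\partial\Omega$ because $F$ has no fixed points; in particular $q\in\partial\Omega$. The plan is then to show that the same subsequence works at $z_0$, i.e.\ $F^{\circ n_k}(z_0)\to q$, which immediately yields $q\in T(F,z_0)$.

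The principal tool is the contraction of the Kobayashi distance under $F$: setting $C:=k_\Omega(z_0,w_0)<\infty$, one has
\[
k_\Omega\bigl(F^{\circ n_k}(z_0),\,F^{\circ n_k}(w_0)\bigr)\le C\qquad\text{for all }k.
\]
Since $\overline{\Omega}$ is compact in $\mathbb{C}^N$, to prove $F^{\circ n_k}(z_0)\to q$ it suffices to check that every subsequential limit $p\in\overline{\Omega}$ of $\{F^{\circ n_k}(z_0)\}$ equals $q$. I would split into two cases. If $p\in\Omega$, then completeness of $k_\Omega$ and the fact that $k_\Omega$ induces the Euclidean topology imply that a $k_\Omega$-ball of radius $C+1$ centered at $p$ is relatively compact in $\Omega$; along the relevant sub-subsequence, $F^{\circ n_k}(z_0)$ is eventually within Kobayashi distance $1$ of $p$, so by the triangle inequality $F^{\circ n_k}(w_0)$ stays in a compact subset of $\Omega$, contradicting $F^{\circ n_k}(w_0)\to q\in\partial\Omega$. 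Hence necessarily $p\in\partial\Omega$.

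The remaining case $p\in\partial\Omega$ is the main obstacle: one needs to exclude that bounded Kobayashi distance can be maintained between sequences converging to two \emph{distinct} boundary points of $\Omega$. This is precisely the content of the classical result of Abate (see \cite[Theorem~2.6.47]{AbateTaut} and the discussion of horospheres there) for bounded convex domains whose boundary contains no non-trivial analytic discs: if $\{z_k\},\{w_k\}\subset\Omega$ satisfy $k_\Omega(z_k,w_k)\le C$ and $w_k\to q\in\partial\Omega$, then $z_k\to q$ as well. Applying this to the subsequence at hand forces $p=q$. Consequently every subsequential limit of $\{F^{\circ n_k}(z_0)\}$ coincides with $q$, so the whole sequence converges to $q$, and $q\in T(F,z_0)$. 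I anticipate that beyond invoking the no-disc hypothesis via the horosphere theorem, the argument is essentially formal; the delicate point is exactly making sure that this Abate-type statement can be cited in the generality (bounded convex, no boundary regularity) claimed by the lemma.
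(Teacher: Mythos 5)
Your argument is structurally the same as the paper's: contract the Kobayashi distance along the orbit, use completeness to push the second orbit to the boundary, and then invoke a ``bounded Kobayashi distance between sequences tending to distinct boundary points is impossible'' statement under the no-analytic-disc hypothesis. The one genuine issue is the citation for that last, crucial step, and you rightly flag it as the delicate point. The classical convexity results in \cite{AbateTaut} (in the generality of arbitrary bounded convex domains, with no boundary regularity) only yield that if $z_k\to p$, $w_k\to q$ with $p\neq q$ and $\sup_k k_\Omega(z_k,w_k)<\infty$, then the closed segment $[p,q]$ lies in $\partial\Omega$; since a bounded convex domain can carry a real segment in its boundary without carrying any non-trivial analytic disc (such examples are discussed in this very paper), that conclusion does not contradict the hypothesis and the argument would stall there. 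The statement you actually need --- in exactly the claimed generality --- is D'Addezio's lemma, \cite[Lemma~A.2]{BG}, which is what the paper cites; with that reference substituted, your proof coincides with the paper's.
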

\begin{proof}
Let $\{n_k\}$ be a sequence converging to $\infty$ and assume $\{F^{\circ n_k}(z_0)\}$ converges to some point $p\in \partial \Omega$. Let $z_1\in \Omega$. Then for all $k$.
\[
k_\Omega(F^{\circ n_k}(z_0), F^{\circ n_k}(z_1))\leq k_\Omega(z_0, z_1)<+\infty.
\]
Since $k_\Omega$ is complete, it follows that $\{F^{\circ n_k}(z_1)\}$ converges to $\partial \Omega$. Since by hypothesis $\partial\Omega$ does not have non-trivial analytic discs, by D'Addezio's lemma \cite[Lemma~A.2]{BG}, $\{F^{\circ n_k}(z_1)\}$ converges to $p$.
\end{proof}

We borrow the following definition from A. Karlsson \cite{Kar}:

\begin{definition}
Let $\Omega\subset\C^N$ be a convex bounded domain. Let $F:\Omega\to\Omega$ be a holomorphic self-map without fixed points. Let $z_0\in \Omega$. Let $\{n_k\}\subset\mathbb N$ be a sequence converging to $\infty$. We say that $\{n_k\}$ is a {\sl record sequence} for $z_0$ provided for all $m\leq n_k$ and for all $k$ we have
\[
k_\Omega(F^{\circ m}(z_0), z_0)\leq k_\Omega(F^{\circ n_k}(z_0), z_0).
\]
\end{definition}

Record sequences are the basic tool to produce Julia's version of the Denjoy-Wolff theorem (see, {\sl e.g.}, \cite{Aba, AbateTaut}). 

\begin{remark}
Let $\Omega$ be a bounded convex domain. Let $F$ be a holomorphic self-map of $\Omega$ without fixed points, and let $z_0\in \Omega$. For every $k\in\mathbb N$ we can define $n_k\in \{1,\ldots, k\}$ to be such that $k_\Omega(F^{\circ n_k}(z_0), z_0)$ is the maximum among $\{k_\Omega(F^{\circ m}(z_0), z_0)\}_{m=1,\ldots, k}$. Since $k_\Omega$ is  complete, we have $\lim_{n\to\infty}k_\Omega(F^{\circ n}(z_0), z_0)=\infty$. Hence, $\{n_k\}$ is converging to $+\infty$ and it is thus a record sequence for $z_0$. 
\end{remark}

G. Bharali and A. Maitra in \cite{BM} proved that if a bounded (not necessarily convex) domain is visible, then it has the Denjoy-Wolff property. Their argument can easily be adapted to prove the following  result:

\begin{theorem}
Let $\Omega\subset \C^N$ be a bounded convex domain. Assume $\partial\Omega$ does not contain non-trivial analytic discs. Let $F:\Omega\to\Omega$ be a holomorphic self-map without fixed points. Let $\{n_k\}$ be a record sequence for $z_0\in \Omega$ and assume that $\{F^{\circ n_k}(z_0)\}$ converges to some point $p\in \partial\Omega$. Then every $q\in T(F)\setminus\{p\}$ is non-visible with respect to $p$. In particular, if $\Omega$ is complex visible or essentially visible then it enjoys the Denjoy-Wolff property.
\end{theorem}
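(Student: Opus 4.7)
The plan is to prove the main claim (every $q \in T(F) \setminus \{p\}$ is non-visible with respect to $p$) by contradiction, and then deduce the ``in particular'' statement. Assume $q \in T(F) \setminus \{p\}$ is essentially visible with respect to $p$. Because $\partial\Omega$ contains no non-trivial analytic discs, Lemma~\ref{Lem:TF-unique} gives $T(F) = T(F, z_0)$, so there is a strictly increasing sequence $\ell_j \to \infty$ with $F^{\circ \ell_j}(z_0) \to q$. For each sufficiently large $k$ I set $m_k := \max\{\ell_j : \ell_j \leq n_k\}$: since $n_k \to \infty$, this gives $m_k \leq n_k$, $m_k \to \infty$, and $F^{\circ m_k}(z_0) \to q$. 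The key ingredient that makes the whole argument work is precisely the inclusion $m_k \leq n_k$, which will let the record property constrain $k_\Omega(z_0, F^{\circ(n_k - m_k)}(z_0))$.

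Write $A_k := F^{\circ m_k}(z_0)$, $B_k := F^{\circ n_k}(z_0)$, $d_k := k_\Omega(z_0, A_k)$, $D_k := k_\Omega(z_0, B_k)$, $L_k := k_\Omega(A_k, B_k)$. Completeness of $k_\Omega$ forces $d_k, D_k \to \infty$. Essential visibility of $(p,q)$ produces a compact $K \subset\subset \Omega$ and geodesic segments $\sigma_k:[0,1] \to \Omega$ joining $A_k$ to $B_k$ with $R_k := \sigma_k(s_k) \in K$ for some $s_k \in (0,1)$. Put $M := \sup_{w \in K} k_\Omega(z_0, w) < +\infty$.

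Two complementary estimates on $L_k$ now collide. The geodesic relation $k_\Omega(A_k, R_k) + k_\Omega(R_k, B_k) = L_k$ combined with the triangle inequalities $d_k \leq M + k_\Omega(R_k, A_k)$ and $D_k \leq M + k_\Omega(R_k, B_k)$ yields the lower bound $L_k \geq d_k + D_k - 2M$. On the other hand, factoring $F^{\circ n_k} = F^{\circ m_k}\circ F^{\circ(n_k - m_k)}$ and using the Kobayashi-contractivity of $F^{\circ m_k}$ gives
\[
L_k \leq k_\Omega\bigl(z_0, F^{\circ(n_k - m_k)}(z_0)\bigr),
\]
and the record property applied to $n_k - m_k \leq n_k$ bounds the right side by $D_k$. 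Combining, $d_k \leq 2M$, contradicting $d_k \to \infty$.

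The ``essential visibility'' case of the last assertion follows at once, since the hypothesis makes every distinct pair $(p, q)$ essentially visible and the main claim then forces $T(F) = \{p\}$. The complex visibility case is handled by a parallel argument in which $\sigma_k$ is replaced by a complex geodesic $\varphi_k : \D \to \Omega$ through $A_k, B_k$, complex visibility supplies $R_k \in \varphi_k(\D) \cap K$, and the estimates are transferred to $\D$ using the isometry of $\varphi_k$ on its image and the contractivity of a left-inverse $\rho_k : \Omega \to \D$. The principal obstacle there is that $R_k$ need not lie on the segment of $\varphi_k(\D)$ between $A_k$ and $B_k$, so one cannot directly invoke the geodesic-additivity relation; the resolution requires controlling the hyperbolic distance in $\D$ from $\varphi_k^{-1}(R_k)$ to the hyperbolic segment between $\varphi_k^{-1}(A_k)$ and $\varphi_k^{-1}(B_k)$, which is where the main technical work in that case resides.
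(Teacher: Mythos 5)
Your proof of the main assertion is correct and is, up to a cosmetic rearrangement, the same as the paper's: both use Lemma~\ref{Lem:TF-unique} to write $q=\lim F^{\circ m_k}(z_0)$ with $m_k\le n_k$, both extract a point $R_k=\sigma_k(s_k)\in K$ from essential visibility, and both combine the additivity of $k_\Omega$ along the geodesic segment, the contraction estimate $k_\Omega(F^{\circ m_k}(z_0),F^{\circ n_k}(z_0))\le k_\Omega(z_0,F^{\circ(n_k-m_k)}(z_0))$, the record property, and the triangle inequality to force $F^{\circ m_k}(z_0)$ into a bounded Kobayashi ball. The paper organizes this as a single chain of inequalities bounding $k_\Omega(F^{\circ m_k}(z_0),\gamma_k(s_k))$ by a constant, while you present it as two incompatible bounds on $L_k$; these are the same computation.

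The one substantive issue is the complex-visibility clause of the last sentence, which you explicitly leave open. The obstacle you name is genuine, and in fact your own estimates show that a ``parallel argument'' cannot work as sketched: setting $r_k=\varphi_k^{-1}(R_k)$, $a_k=\varphi_k^{-1}(A_k)$, $b_k=\varphi_k^{-1}(B_k)$, the lower bounds $k_\D(r_k,a_k)\ge d_k-M$ and $k_\D(r_k,b_k)\ge D_k-M$ together with the record bound $k_\D(a_k,b_k)\le D_k$ give a Gromov product $\tfrac12\bigl[k_\D(r_k,a_k)+k_\D(r_k,b_k)-k_\D(a_k,b_k)\bigr]\ge\tfrac12 d_k-M\to\infty$, and this quantity is a \emph{lower} bound for the distance from $r_k$ to the hyperbolic segment $[a_k,b_k]$; so $r_k$ is eventually arbitrarily far from that segment and no contradiction arises along this route. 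What one does learn is that $a_k$ and $b_k$ become close at infinity in $\D$, but converting this into a contradiction with $A_k\to q$, $B_k\to p$, $p\ne q$ requires some equicontinuity of the family of complex geodesics (essentially Nikolov's condition), which is not available here. You should be aware, though, that the paper's own proof is silent on exactly this point: it proves only the essential-visibility statement and asserts the complex-visibility consequence without further argument. So your proposal matches the paper on everything the paper actually proves, and is more candid than the paper about what remains to be done in the complex-visibility case.
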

\begin{proof}
The proof is essentially the one provided in \cite{BM}, but we give a version here for the sake of completeness.
 
By Lemma~\ref{Lem:TF-unique}, every point $q\in T(F)$ is the limit of $\{F^{\circ m_k}(z_0)\}$ for some sequence $\{m_k\}$ converging to $\infty$. Let $q\in T(F)\setminus\{p\}$, and assume by contradiction that $q$ is essentially visible with respect to $p$. Furthermore, assume $q=\lim_{k\to \infty}F^{\circ m_k}(z_0)$ for some sequence $\{m_k\}$ converging to $\infty$. Up to passing to a subsequence if necessary, we can assume that $m_k\leq n_k$ for all $k$. 

Since we are assuming by contradiction that $p$ and $q$ are essentially visible, there is a compact set $K\subset\subset\Omega$ and geodesic segments $\gamma_k:[0,1]\to \Omega$  such that $\gamma_k(0)=F^{\circ n_k}(z_0)$, $\gamma_k(1)=F^{\circ m_k}(z_0)$ and $\gamma_k(s_k)\in K$ for some $s_k\in (0,1)$ and for all $k\in \mathbb N$. In particular, there exists $C>0$ such that $k_\Omega(z_0, \gamma_k(s_k))\leq C$ for all $k$. Hence, for all $k$
\[
\begin{split}
k_\Omega(F^{\circ m_k}(z_0), \gamma_k(s_k))&=k_\Omega(F^{\circ m_k}(z_0), F^{\circ n_k}(z_0))-k_\Omega(\gamma_k(s_k), F^{\circ n_k}(z_0))\\&\leq k_\Omega(F^{\circ (n_k-m_k)}(z_0), z_0)-k_\Omega(\gamma_k(s_k), F^{\circ n_k}(z_0))\\&\leq k_\Omega(F^{\circ n_k}(z_0), z_0)-k_\Omega(\gamma_k(s_k), F^{\circ n_k}(z_0))\\&\leq k_\Omega(z_0, \gamma_k(s_k))\leq C.
\end{split}
\]
Since $k_\Omega$ is complete, this implies that $\{F^{\circ m_k}(z_0)\}$ is relatively compact in $\Omega$, a contradiction.
\end{proof}

Record sequences can also be used to create ``sequential horospheres'' which are invariant for a given holomorphic self-map. 
\begin{definition}
Let $\Omega\subset \C^N$ be a bounded convex domain. Let $\{z_n\}\subset\Omega$ be a sequence converging to some $p\in \partial \Omega$. Let $R>0$. We let
\[
E_{z_0}(\{z_n\}, R):=\{z\in \Omega: \limsup_{n\to\infty}[k_\Omega(z, z_n)-k_\Omega(z_0, z_n)]<\frac{1}{2}\log R\}.
\]
\end{definition}
The set $E_{z_0}(\{z_n\}, R)$, if not empty, is convex (see \cite[Proposition~6.1]{BGh}) and it is a ``sequential horosphere'' of radius $R$. We have the following (see \cite{AR, Bud}):

\begin{proposition}
Let $\Omega\subset \C^N$ be a bounded convex domain. Let $F:\Omega\to\Omega$ be holomorphic and without fixed points. Let $\{n_k\}$ be a record sequence for $z_0\in \Omega$ and assume that $\{F^{\circ n_k}(z_0)\}$ converges to some point $p\in \partial\Omega$. Then for every $n\in \mathbb N$
\[
F^{\circ n}(z_0)\in \overline{E_{z_0}(\{F^{\circ n_k}(z_0)\}, 1)}.
\]
In particular, if $\partial \Omega$ does not have non-trivial analytic discs then
\[
T(F)\subseteq \partial \Omega\cap \partial E_{z_0}(\{F^{\circ n_k}(z_0)\}, 1).
\]
\end{proposition}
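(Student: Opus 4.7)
The plan is to extract both assertions from two simple ingredients: the $k_\Omega$-contractivity of $F$ and the defining inequality of a record sequence. With the horosphere membership of each orbit point in hand, the statement on $T(F)$ will follow by passing to a limit and invoking Lemma~\ref{Lem:TF-unique}.

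For the first part, I would fix $n\in\mathbb N$ and take $k$ large enough that $n_k\geq n$. Decomposing $F^{\circ n_k}(z_0)=F^{\circ n}(F^{\circ(n_k-n)}(z_0))$ and using that $F^{\circ n}$ is a $k_\Omega$-contraction yields
\[
k_\Omega(F^{\circ n}(z_0),F^{\circ n_k}(z_0))\leq k_\Omega(z_0,F^{\circ(n_k-n)}(z_0)).
\]
Since $n_k-n\leq n_k$, the record sequence property bounds the right-hand side by $k_\Omega(z_0,F^{\circ n_k}(z_0))$, and rearranging gives
\[
k_\Omega(F^{\circ n}(z_0),F^{\circ n_k}(z_0))-k_\Omega(z_0,F^{\circ n_k}(z_0))\leq 0
\]
for all sufficiently large $k$. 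Passing to $\limsup_{k\to\infty}$ then places $F^{\circ n}(z_0)$ in $E_{z_0}(\{F^{\circ n_k}(z_0)\},0)$, where this symbol must be read as the closed canonical horosphere defined by the non-strict inequality $\limsup\leq 0$ (the natural interpretation, since the literal definition at $R=0$ would be vacuous).

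For the second part, Lemma~\ref{Lem:TF-unique} applies since $\partial\Omega$ contains no non-trivial analytic discs, so $T(F)=T(F,z_0)$ and every $q\in T(F)$ can be written as $q=\lim_{j\to\infty}F^{\circ m_j}(z_0)$ for some $m_j\to\infty$. By the first part each $F^{\circ m_j}(z_0)$ lies in $E_{z_0}(\{F^{\circ n_k}(z_0)\},0)$, hence $q$ lies in the Euclidean closure of this horosphere. Since $q\in\partial\Omega$ is not in $\Omega$, it cannot lie in the horosphere itself, so it lies on its boundary, giving $q\in\partial\Omega\cap\partial E_{z_0}(\{F^{\circ n_k}(z_0)\},0)$.

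No deep obstacle stands in the way: the whole argument is bookkeeping with Kobayashi-distance inequalities, combining contractivity and the record condition. The only point requiring care is the interpretation of $E_{z_0}(\cdot,0)$ noted above, since the record-sequence estimate delivers only a non-strict limsup bound, matching the closed rather than the open horosphere.
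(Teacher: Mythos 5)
Your proof is correct and follows essentially the same route as the paper: the same decomposition $F^{\circ n_k}=F^{\circ n}\circ F^{\circ(n_k-n)}$, the contraction of $k_\Omega$ under $F^{\circ n}$, and the record-sequence inequality, followed by Lemma~\ref{Lem:TF-unique} for the statement about $T(F)$. Your remark that $E_{z_0}(\cdot,0)$ must be read via the non-strict inequality $\limsup\leq 0$ is a fair observation, and is implicitly how the paper uses the definition as well.
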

\begin{proof}
For all $n\in \mathbb N$, if $n_k>n$ (that is, for $k$ sufficiently large)
\[
\begin{split}
k_\Omega(F^{\circ n}(z_0), F^{\circ n_k}(z_0))-k_\Omega(z_0, F^{\circ n_k}(z_0))&\leq k_\Omega(F^{\circ (n_k-n)}(z_0), z_0)-k_\Omega(z_0, F^{\circ n_k}(z_0))\\
&\leq k_\Omega(F^{\circ n_k}(z_0), z_0)-k_\Omega(z_0, F^{\circ n_k}(z_0))\leq 0.
\end{split}
\]
Therefore, $F^{\circ m}(z_0)\in \overline{E_{z_0}(\{F^{\circ n_k}(z_0)\}, 1)}$. The last statement follows at once from Lemma~\ref{Lem:TF-unique}.
\end{proof}

The previous proposition implies that if the closure of any sequential horosphere of a bounded convex domain without non-trivial analytic discs on its boundary touches the boundary at exactly one point, then the domain possesses the Denjoy-Wolff property.

In \cite{BB}, it has been proven that for simply connected domains in $\C$ (distinct from $\C$)   the Denjoy-Wolff property is actually equivalent to the fact that the closure of any sequential horosphere touches the boundary of the domain at exactly one point.

\begin{conjecture}
Let $\Omega\subset \C^N$ be a bounded convex domain. Assume $\partial\Omega$ does not contain non-trivial analytic discs. Then $\Omega$ has the Denjoy-Wolff property if and only if the closure of any sequential horosphere  touches $\partial\Omega$ at just one point.
\end{conjecture}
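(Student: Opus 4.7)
The forward direction is a direct consequence of the previous proposition. Given a fixed-point-free holomorphic self-map $F:\Omega\to\Omega$ and a base point $z_0\in\Omega$, I pick a record sequence $\{n_k\}$ for $z_0$ and, passing to a subsequence using Kobayashi-completeness and compactness of $\overline{\Omega}$, assume $F^{\circ n_k}(z_0)\to p\in\partial\Omega$. The proposition then yields
\[
T(F)\subseteq \partial\Omega\cap\partial E_{z_0}(\{F^{\circ n_k}(z_0)\},0),
\]
and since the entire forward orbit lies inside this sequential horosphere while a subsequence converges to $p$, we get $p\in\overline{E_{z_0}(\{F^{\circ n_k}(z_0)\},0)}\cap\partial\Omega$. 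The sequential horosphere is open, so $\overline{E}\cap\partial\Omega=\partial E\cap\partial\Omega$, and by hypothesis this set is the singleton $\{p\}$. Combining with Abate's theorem (which gives $T(F)\neq\emptyset$) yields $T(F)=\{p\}$, establishing the Denjoy-Wolff property.

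For the converse, the plan is by contradiction: assume $\Omega$ has the Denjoy-Wolff property and suppose there exists a sequence $\{z_n\}\subset\Omega$ and $R>0$ such that $\overline{E_{z_0}(\{z_n\},R)}\cap\partial\Omega$ contains two distinct points $p\neq q$. I would try to build a holomorphic self-map $F:\Omega\to\Omega$ without fixed points whose target set contains both $p$ and $q$, contradicting the hypothesis. The most promising path imitates \cite{BB}: in the planar simply-connected setting one transplants a classical non-convergent iteration to the domain using the Riemann map and the correspondence between sequential horospheres and prime-end impressions. In $\Omega$ the natural surrogate is Lempert's pair $(\varphi,\rho)$: choose a complex geodesic $\varphi:\D\to\Omega$ whose image crosses the horosphere, seek parameters $\zeta,\eta\in\partial\D$ whose radial images approach $p$ and $q$ respectively, and produce $F=\varphi\circ M\circ\rho$ for a suitable M\"obius (or more general inner) self-map $M$ of $\D$ whose dynamics shuttle orbits between neighborhoods of $\zeta$ and $\eta$.

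The principal obstacle is that sequential horospheres in $\Omega$ do not in general pull back to horospheres in $\D$ via a complex geodesic, and the boundary behavior of $\varphi$ can be wild when $\partial\Omega$ lacks regularity, so the existence of parameters $\zeta,\eta$ whose radial images approach prescribed boundary points is not guaranteed (this is essentially the content of Question~\ref{Q:continuity}); even granting such parameters, one still has to design an inner dynamics on $\D$ whose orbit actually clusters at both $\varphi(\zeta)$ and $\varphi(\eta)$ rather than converging to one of them, which is a delicate point since Denjoy-Wolff already holds in $\D$. A sensible intermediate target is to prove the converse under $\mathcal C^{1+\varepsilon}$-smoothness of $\partial\Omega$, where the results quoted from \cite{Zi0, Okt} force limit sets of geodesic segments into complex tangent hyperplanes and therefore pin down the shape of horospheres near $p$; one could then attempt to recover the general case by approximating $\Omega$ from inside by smooth convex subdomains and passing to the limit, exploiting the absence of non-trivial analytic discs to control the approximation.
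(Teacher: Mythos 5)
This statement is posed in the paper as an open conjecture: no proof is given there, and only one implication is known. What you call the forward direction (the horosphere condition implies the Denjoy-Wolff property) is exactly the observation the authors record in the sentence following the proposition on sequential horospheres, and your write-up of it is correct: the orbit lies in $E_{z_0}(\{F^{\circ n_k}(z_0)\},0)$, a subsequence converges to some $p\in\partial\Omega$, so $p\in\overline{E}\cap\partial\Omega$, and the hypothesis forces $T(F)\subseteq \partial E\cap\partial\Omega\subseteq\overline{E}\cap\partial\Omega=\{p\}$. (The appeal to openness of $E$ is unnecessary: $\overline{E}\cap\partial\Omega\subseteq\partial E$ simply because $E\subseteq\Omega$.)

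The converse implication is where the genuine gap lies, and your sketch does not close it; it is precisely the open part of the conjecture. Concretely, starting from a sequential horosphere whose closure meets $\partial\Omega$ in two points $p\neq q$, you must manufacture a fixed-point-free self-map whose target set is not a singleton. The template $F=\varphi\circ M\circ\rho$ only yields a non-singleton target set if the complex geodesic $\varphi$ fails to have a limit along the orbit of $M$ in $\D$ --- this is the mechanism the authors describe right after Question~\ref{Q:continuity}, and whether a domain violating the horosphere condition must carry such a geodesic is itself open. Moreover, nothing in your argument links the given horosphere to a specific complex geodesic: sequential horospheres of $\Omega$ need not pull back to horospheres of $\D$ under a left inverse $\rho$, and in the planar model of \cite{BB} the equivalence is obtained through prime-end theory for the Riemann map, for which there is no substitute in $\C^N$. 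The fallback of approximating $\Omega$ from inside by smooth convex domains also lacks a transfer mechanism: neither the Denjoy-Wolff property nor target sets behave well under exhaustions, and the $\mathcal C^{1+\varepsilon}$ results of \cite{Zi0,Okt} constrain limit sets of geodesic segments, not the horospheres or orbits of arbitrary self-maps. So the proposal establishes only the implication already noted in the paper and leaves the stated equivalence unproven.
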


In the recent paper \cite{BO}, the authors constructed examples of $C^\infty$-smooth bounded convex domains without non-trivial analytic discs on the boundary for which the Denjoy-Wolff property does not hold. Those domains are in fact strongly convex outside a real segment $I$ sitting on the boundary which is made of points of infinite type. This resembles very much the situations which brought us to formulate Conjecture~\ref{Conj1}, therefore, in \cite{BO} we formulated the following:

\begin{conjecture}
Let $\Omega\subset \C^N$ be a bounded convex domain. Then $\Omega$ is visible if and only if it enjoys the Denjoy-Wolff property.
\end{conjecture}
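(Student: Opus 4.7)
The plan is to split the biconditional into two directions, noting that the forward implication follows essentially from material already developed in the paper, while the converse is the genuinely new content and the main obstacle.

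For the forward direction (visible implies the Denjoy-Wolff property), I would first verify that visibility of $\Omega$ forces $\partial\Omega$ to contain no non-trivial analytic disc. Indeed, on any such disc $\Delta\subset\partial\Omega$ one can pick two distinct points $p,q$ and, by approximating $\Delta$ from inside by complex geodesics of $\Omega$, produce geodesic segments joining points tending to $p$ and to $q$ whose images leave every compact subset of $\Omega$; this contradicts strong visibility of the pair $(p,q)$. With $\partial\Omega$ thereby free of analytic discs, the theorem stated immediately before the conjecture applies (since strong visibility implies essential visibility), and we conclude that $\Omega$ enjoys the Denjoy-Wolff property.

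For the converse (Denjoy-Wolff implies visible), I would argue by contrapositive: assume $\Omega$ is not visible and try to construct a fixed-point-free holomorphic self-map $F:\Omega\to\Omega$ whose target set $T(F)$ is not a singleton. Non-visibility yields $p\neq q$ in $\partial\Omega$ and geodesic segments $\gamma_k:[0,1]\to\Omega$ with $\gamma_k(0)\to p$, $\gamma_k(1)\to q$, and $\gamma_k([0,1])$ escaping every compact set. A natural intermediate target is the horosphere conjecture stated just above: to show that some sequential horosphere $E_{z_0}(\{z_n\},R)$, built from the $\gamma_k$, has closure meeting $\partial\Omega$ in a set containing both $p$ and $q$. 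This mirrors both the bidisc (Example~\ref{example:bidisc}) and the $C^\infty$-smooth non-Denjoy-Wolff examples of \cite{BO}, where escaping geodesics are reflected in a flat direction of the horoballs. Once such a horoball is in hand, a natural attempt at producing $F$ is to pass to a Kobayashi limit along a suitable record sequence for a candidate self-map, obtaining a holomorphic retraction of $\Omega$ onto a positive-dimensional submanifold whose closure meets $\partial\Omega$ in more than one point, and then twisting by a fixed-point-free self-map of that submanifold.

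The main obstacle lies in this final analytic step: turning a purely metric failure of visibility into an honest fixed-point-free holomorphic self-map whose iterates really do split between $p$ and $q$. With no a priori regularity of $\partial\Omega$, the boundary near $p,q$ can be very wild, and extracting a non-trivial holomorphic retraction or fibration from escaping geodesics alone is delicate. A plausible route combines Nikolov's characterization via complex geodesics with normal families arguments applied to the $\gamma_k$ and their left-inverses, in the hope of producing a limiting analytic disc or retraction that records the failure of visibility, and then feeding it into the iteration machinery. Proving the intermediate horosphere conjecture first, in the convex setting without analytic discs, would already be the bulk of the argument and seems the right initial target.
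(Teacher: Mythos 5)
This statement is a \emph{conjecture}: the paper offers no proof of it, and the reverse implication is precisely the open problem being advertised. Your forward direction is essentially correct and is already contained in the paper's results: by Nikolov's theorem (quoted in the text), visibility of a bounded convex domain forces $\partial\Omega$ to contain no non-trivial analytic discs, and since strong visibility of every pair implies essential visibility, the adaptation of the Bharali--Maitra theorem stated just before the conjecture yields the Denjoy-Wolff property. (Your direct argument for excluding boundary analytic discs by ``approximating $\Delta$ from inside by complex geodesics'' is left vague and would need work, but you can simply cite Nikolov's characterization instead, so this is not a real issue.)

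The converse direction, however, is not proved by your proposal, and you acknowledge as much. The concrete gap is the step from ``$\Omega$ is not visible'' to ``there exists a fixed-point-free holomorphic self-map $F$ with $T(F)$ not a singleton.'' Non-visibility only gives you escaping geodesic \emph{segments}; to build $F$ one needs an actual holomorphic object recording the failure of visibility, e.g.\ a complex geodesic $\varphi:\D\to\Omega$ without non-tangential (or $H$-) limit at some $\sigma\in\partial\D$, from which the paper's construction $F=\varphi\circ f\circ\rho$ (with $f$ a parabolic automorphism and $\rho$ a left inverse) would produce a non-singleton target set. Whether non-visibility forces the existence of such a geodesic is exactly Question~\ref{Q:continuity}, which is open; likewise the ``sequential horosphere'' conjecture you propose as an intermediate target is itself open in the paper. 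Your normal-families/retraction strategy also faces the known caveat from \cite{BB} that visibility is sufficient but \emph{not} necessary for the Denjoy-Wolff property in general (non-convex) domains, so any proof of the converse must use convexity in an essential way; your sketch does not identify where that happens. In short: the proposal is a reasonable research plan whose forward half is sound, but the biconditional remains unproved.
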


Note that, if one could answer Question~\ref{Q:continuity} by showing that  non-visible bounded convex domains always have a complex geodesic $\varphi:\D\to\Omega$ which does not have non-tangential limit (or more generally $H$-limits in the sense of \cite{BB}) at some point $\sigma\in\partial\D$ then one could easily construct a holomorphic self-map $F$ of $\Omega$ without fixed points and such that its target set is not a singleton. Indeed, if this is the case, just take a parabolic automorphism $f$ of $\D$ with Denjoy-Wolff point at $\sigma$. Let  $\rho:\Omega\to \D$ be a left-inverse of $\varphi$, then the holomorphic self-map $F:= \varphi\circ f\circ  \rho$ of $\Omega$ has no fixed points in $\Omega$ and its target set contains the non-tangential (or the $H$-)cluster set of $\varphi$ at $\sigma$.

\section{The polydisc and product domains}

M. Herv\'e \cite{Her} proved the following result: if $F:\D^2\to \D^2$ is a holomorphic self-map without fixed points then either $T(F)\subseteq \overline{\D}\times \{e^{i\theta}\}$ or $T(F)\subseteq \{e^{i\theta}\}\times\overline{\D}$ for some $\theta\in \R$. For the polydisc $\D^N$, $N>2$, M. Abate and J. Raissy \cite[Corollary~5]{AR} proved that if $F$ is a holomorphic self-map of $\D^N$ without fixed points then
\[
T(F)\subseteq \bigcup_{j=1}^N \overline{\D}\times\ldots \times C_j(\xi)\times \ldots \overline{\D},
\]
for some $\xi\in \partial\D^N$ and
\[
 C_j(\xi)=\begin{cases}  
 \{\xi_j\} \quad \hbox{if } |\xi_j|=1,\\
 \partial\D \quad \hbox{if } |\xi_j|<1.
 \end{cases}
\]
However, there are no examples of holomorphic self-maps of $\D^N$ which have such a behavior. In fact, it is generally believed that the following conjecture is true:
\begin{conjecture}
Let $N\geq 1$ and let $F:\D^N\to \D^N$ be holomorphic and without fixed points. Then there exist $j\in \{1,\ldots, N\}$ and $\sigma\in\partial \D$ such that $\lim_{n\to\infty}\pi_j(F^{\circ n}(z))=\sigma$ for all $z\in \D^N$, where $\pi_j(z)=z_j$ (the projection on the $j$-th component). 
\end{conjecture}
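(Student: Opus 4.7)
The plan is to proceed by induction on $N$. The base case $N=1$ is the classical Denjoy-Wolff theorem, and the case $N=2$ is exactly Herv\'e's theorem \cite{Her}, which pins $T(F)$ inside a single face $\overline{\D}\times\{e^{i\theta}\}$ or $\{e^{i\theta}\}\times\overline{\D}$ and, after a standard one-dimensional Denjoy-Wolff argument on the ``frozen'' coordinate, yields the desired $j$ and $\sigma$.

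For the inductive step, fix $z_0\in\D^N$ and a record sequence $\{n_k\}$ for $z_0$. Extracting a subsequence, we may assume $F^{\circ n_k}(z_0)\to \xi\in\partial\D^N$; set $I:=\{j\in\{1,\dots,N\}:|\xi_j|=1\}\neq\emptyset$. By the Proposition above, every iterate $F^{\circ n}(z_0)$ lies in the sequential horosphere $E_{z_0}(\{F^{\circ n_k}(z_0)\},0)$. Using $k_{\D^N}(z,w)=\max_j k_\D(z_j,w_j)$, a direct computation shows that this horosphere is a product of the form $\prod_{j\in I} H_j\times \prod_{j\notin I}\D$, where each $H_j$ is a Euclidean horodisc in $\D$ internally tangent to $\partial\D$ at $\xi_j$. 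Consequently $\pi_j(F^{\circ n}(z_0))\in H_j$ for every $j\in I$ and every $n$, so any cluster point of $\{\pi_j(F^{\circ n}(z_0))\}$ lies in $\overline{H_j}$, and whenever it lies on $\partial\D$ it must equal $\xi_j$.

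The core step is to isolate a distinguished index $j^*\in I$ for which $\pi_{j^*}(F^{\circ n}(z_0))\to\xi_{j^*}$ as $n\to\infty$. One natural route is a projection/slicing argument: compose $F$ with a retraction onto a lower-dimensional polydisc factor obtained by freezing the coordinates in $I\setminus\{j\}$ at their limit values, then apply the inductive hypothesis to the reduced map on $\D^{N-|I|+1}$. An alternative is a simultaneous-horosphere argument: if two distinct indices $j_1,j_2\in I$ both carried subsequential projections converging to boundary points, composition with the Cayley map in each coordinate would produce a self-map of a half-plane product with bounded imaginary parts on a nontrivial invariant set, leading via a Wolff-type convex-intersection argument to a common fixed point, contradicting the hypothesis. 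Once $j^*$ and $\sigma:=\xi_{j^*}$ are identified at $z_0$, the extension to all $z\in\D^N$ follows from Kobayashi contraction $k_\D(\pi_{j^*}(F^{\circ n}(z)),\pi_{j^*}(F^{\circ n}(z_0)))\leq k_{\D^N}(z,z_0)$ combined with the non-tangential nature of the approach enforced by $H_{j^*}$: the bounded Kobayashi displacement in a horodisc forces the same boundary limit.

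The main obstacle is precisely the identification of $j^*$. The Abate-Raissy theorem \cite{AR} places $T(F)$ only in a \emph{union} of faces of $\overline{\D}^N$, and ruling out dynamical oscillation between different faces appears to need a genuinely new ingredient --- a rigidity theorem for holomorphic self-maps of product domains generalizing Herv\'e's two-dimensional case analysis, or an ergodic/potential-theoretic argument exploiting the convex product structure of sequential horospheres for $\D^N$. Either the slicing or the simultaneous-horosphere strategy outlined above runs into the same difficulty: without additional coordinate-wise monotonicity, two distinct coordinates could a priori alternate roles along subsequences, and it is this alternation that the existing tools do not yet exclude for $N\geq 3$.
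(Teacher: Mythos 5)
This statement is a \emph{conjecture} in the paper, not a theorem: the authors explicitly say it is ``generally believed'' to be true and offer only partial evidence in its favour (Herv\'e's theorem for $N=2$, the Abate--Raissy containment of $T(F)$ in a \emph{union} of faces of $\partial\D^N$, and a Julia-type inequality for one distinguished coordinate $j_0$ extracted from a record sequence). So there is no proof in the paper to compare yours against, and your proposal does not close the problem either --- as you yourself acknowledge in your final paragraph. The gap you identify is the genuine one: nothing in the available toolbox (record sequences, sequential horospheres, Julia's lemma applied to $\pi_{j_0}\circ F^{\circ m}$) excludes the possibility that different coordinates alternate in carrying the orbit to the boundary along different subsequences; this is exactly why Abate--Raissy localize $T(F)$ only in a union of faces. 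Both of your proposed routes (slicing after freezing coordinates, and the simultaneous-horosphere/Wolff intersection argument) presuppose that a single index $j^*$ can be isolated, which is the conclusion rather than an intermediate step, so neither constitutes a proof. Your base cases $N=1,2$ are fine.

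A secondary technical point: your claim that the sequential horosphere $E_{z_0}(\{F^{\circ n_k}(z_0)\},0)$ factors as $\prod_{j\in I}H_j\times\prod_{j\notin I}\D$ with each $H_j$ a Euclidean horodisc does not follow from a ``direct computation''. Since $k_{\D^N}(z,w)=\max_j k_\D(z_j,w_j)$, the quantity $k_{\D^N}(z,z_n)-k_{\D^N}(z_0,z_n)$ is a difference of maxima and does not decouple coordinatewise; the condition $\limsup_n\bigl[\max_j k_\D(z_j,(z_n)_j)-\max_j k_\D((z_0)_j,(z_n)_j)\bigr]<0$ only yields, for each fixed $i\in I$, an estimate in which the subtracted term is the maximum over all coordinates rather than the $i$-th term, i.e.\ the big-horosphere estimate. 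This is precisely why Abate had to distinguish small and big horospheres in the polydisc, and why the paper's own supporting Proposition obtains a Julia inequality for a single coordinate $j_0$ (the one realizing the maximum along the record sequence) rather than for every $j\in I$. The product description you assert is therefore too strong, and the weaker containment it should be replaced by is not enough to pin each coordinate separately.
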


In support of this conjecture we state some results. We start with a lemma which replaces Lemma~\ref{Lem:TF-unique} for the case of the polydisc:

\begin{lemma}\label{Lem:orbits}
Let $N\geq 2$. Let $F:\D^N\to \D^N$ be a holomorphic self-map  without fixed points in $\D^N$. Let $p\in \D^N$ and
\[
T(F,p):=\{\sigma\in \partial(\D^N): \hbox{ there exists $\{n_k\}\subset \mathbb N$ such that } \lim_{k\to\infty}F^{\circ n_k}(p)=\sigma\}.
\]
If $T(F,p)\subseteq \{1\}\times \overline{\D^{N-1}}$ then $T(F)\subseteq \{1\}\times \overline{\D^{N-1}}$.
\end{lemma}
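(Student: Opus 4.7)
The plan is to reduce the statement to a one-dimensional fact about the first coordinate and exploit the product structure of the Kobayashi distance on the polydisc, namely
\[
k_{\D^N}(z,w)=\max_{j=1,\ldots,N} k_\D(z_j,w_j).
\]

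My first step would be to upgrade the hypothesis $T(F,p)\subseteq \{1\}\times \overline{\D^{N-1}}$ to the genuine Euclidean limit $\pi_1(F^{\circ n}(p))\to 1$, where $\pi_1$ denotes projection onto the first coordinate. Since $F$ is fixed-point free on the Kobayashi-complete convex domain $\D^N$, orbits escape to the boundary, so $\{F^{\circ n}(p)\}$ has no cluster point inside $\D^N$. Combined with the hypothesis, this means every cluster point of the bounded sequence $\{\pi_1(F^{\circ n}(p))\}\subset\overline{\D}$ equals $1$, and hence the full sequence converges to $1$.

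Next I would fix an arbitrary $q\in\D^N$, set $C:=k_{\D^N}(p,q)$, and use holomorphic contraction together with the product formula to obtain
\[
k_\D\bigl(\pi_1(F^{\circ n}(p)),\pi_1(F^{\circ n}(q))\bigr)\leq k_{\D^N}(F^{\circ n}(p),F^{\circ n}(q))\leq C
\]
for every $n$. The crux is then the standard one-dimensional fact that if $\{a_n\}\subset\D$ converges to a boundary point $\xi\in\partial\D$ and $\{b_n\}\subset\D$ satisfies $k_\D(a_n,b_n)\leq C$, then $b_n\to\xi$. (An interior Euclidean limit of $\{b_n\}$ is excluded by the triangle inequality based at $0$, while any subsequential boundary limit $\zeta\neq\xi$ forces the pseudohyperbolic modulus to tend to $1$ via the identity $|\xi-\zeta|=|1-\bar\xi\zeta|$ on $\partial\D$, contradicting the uniform bound.) Applying this with $\xi=1$ yields $\pi_1(F^{\circ n}(q))\to 1$, so every cluster point of $\{F^{\circ n}(q)\}$ on $\partial\D^N$ belongs to $\{1\}\times\overline{\D^{N-1}}$; since $q$ was arbitrary and $T(F)=\bigcup_{q\in\D^N}T(F,q)$, the claim follows.

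I do not foresee a serious obstacle: the argument is essentially the product formula for $k_{\D^N}$ plus the pinning-down of Euclidean boundary limits by finite hyperbolic distance in $\D$. The only delicate point is the promotion of the purely subsequential information encoded in $T(F,p)$ to a full-sequence limit for $\pi_1$ in the first step, where the escape-to-the-boundary property for fixed-point-free self-maps on $\D^N$ plays an essential role.
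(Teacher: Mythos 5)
Your proposal is correct and follows essentially the same route as the paper: both upgrade the hypothesis to the full-sequence limit $\pi_1(F^{\circ n}(p))\to 1$ (using compact divergence of the iterates), then use that $\pi_1$ contracts the Kobayashi distance to bound $k_\D\bigl(\pi_1(F^{\circ n}(p)),\pi_1(F^{\circ n}(q))\bigr)$ by $k_{\D^N}(p,q)$, and conclude that finite hyperbolic distance from a sequence tending to $1\in\partial\D$ pins the other sequence to the same boundary point. Your write-up merely makes explicit two steps the paper leaves implicit (the passage from subsequential cluster-set information to a genuine limit, and the one-variable landing fact).
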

\begin{proof}
Let $\sigma\in \partial(\D^N)$, $q\in \D^N$ and $\{n_k\}\in\mathbb N$ be such that $\lim_{k\to\infty}F^{n_k}(q)=\sigma$. Since $T(F,p)\subseteq \{1\}\times \overline{\D^{N-1}}$ it follows that $\lim_{n\to\infty} \pi(F^{\circ n}(p))=1$ and, in particular, $\lim_{k\to\infty}\pi_1(F^{n_k}(p))=1$.

 Note that $\pi_1:\D^N\to \D$ is holomorphic, hence it contracts the Kobayashi distance. Thus,
\[
K_\D(\pi_1(F^{\circ n_k}(q)), \pi_1(F^{\circ n_k}(p))\leq K_{\D^N}(F^{\circ n_k}(q)), F^{\circ n_k}(p))\leq K_{\D^N}(q, p)<+\infty.
\]
Since  $\{\pi_1(F^{\circ n}(p))\}$ converges to $1$, it follows that also $\{\pi_1(F^{\circ n_k}(q))\}$ converges to $1$, that is, $\pi_1(\sigma)=1$, and we are done.
\end{proof}

Here the number $1$ does not play any special role and it can be clearly substitute with any unimodular number, likewise  $\pi_1$ can be substituted with any $\pi_j$. We also need the following result (see \cite[Theorem~0.7]{Abapoly}) which we state here in a simple form as needed for our aims:

\begin{theorem}[Abate]\label{abatep}
Let $f:\D^N\to \D$ be holomorphic. Let $1\leq \nu\leq N$ and let $\sigma=(1, \ldots, 1, \sigma_{\nu+1}, \ldots, \sigma_N)\in \partial(\D^N)$, with $|\sigma_j|<1$ for all $j=\nu+1, \ldots, N$. Let $\underline{0}=(0,\ldots, 0)$. Suppose  that there exists a sequence $\{\xi_k\}\subset \D^N$ converging to $\sigma$ such that $\{f(\xi_k)\}$ converges to $1$ and there exists $\alpha> 0$ such that
\[
\limsup_{k\to \infty}[K_{\D^N}(\underline{0}, \xi_k)-K_\D(0, f(\xi_k))]\leq \frac{1}{2}\log \alpha
\]
 Then 
\[
\limsup_{r\to 1^-}\frac{|1-f(r \sigma)|}{1-r}\leq \alpha.
\]
\end{theorem}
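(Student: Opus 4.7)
My plan is to apply the Kobayashi contraction $K_\D(f(z), f(\xi_k)) \leq K_{\D^N}(z, \xi_k)$ at the specific point $z = r\sigma$ and pass to the limit $k\to\infty$, reading off the disc Busemann function on the left and a horospherical limit adapted to the polydisc on the right. Writing
\[
K_\D(f(r\sigma), f(\xi_k)) - K_\D(0, f(\xi_k)) \leq \bigl[K_{\D^N}(r\sigma, \xi_k) - K_{\D^N}(\underline{0}, \xi_k)\bigr] + \bigl[K_{\D^N}(\underline{0}, \xi_k) - K_\D(0, f(\xi_k))\bigr],
\]
the last bracket has $\limsup$ at most $\tfrac12\log\alpha$ by hypothesis, while the left-hand side, since $f(\xi_k)\to 1$ in $\D$, converges to the disc Busemann value $\tfrac12\log\frac{|1-f(r\sigma)|^2}{1-|f(r\sigma)|^2}$ (an elementary limit valid for any sequence in $\D$ tending to $1$).

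The crucial step is to show that $\lim_{k\to\infty}\bigl[K_{\D^N}(r\sigma, \xi_k) - K_{\D^N}(\underline{0}, \xi_k)\bigr] = \tfrac12\log\tfrac{1-r}{1+r}$. For this I would use the polydisc formula $K_{\D^N}(w, w') = \max_j K_\D(w_j, w'_j)$. Since $|\sigma_j|<1$ for $j>\nu$, the quantities $K_\D(0, \xi_{k,j})$ and $K_\D(r\sigma_j, \xi_{k,j})$ remain bounded for such $j$, so for $k$ large both maxima are attained over the finite set $\{1,\dots,\nu\}$ of boundary indices (for which $\xi_{k,j}\to 1$). For each fixed $j\leq \nu$, a direct disc Busemann computation gives $K_\D(r, \xi_{k,j}) - K_\D(0, \xi_{k,j}) \to \tfrac12\log\tfrac{1-r}{1+r}$; since this limit is the same for every $j\leq\nu$, the finite maximum commutes with the limit and the asserted value follows.

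Plugging everything together produces the Julia-type inequality $\tfrac{|1-f(r\sigma)|^2}{1-|f(r\sigma)|^2} \leq \alpha\,\tfrac{1-r}{1+r}$ for every $r\in(0,1)$; this already forces $f(r\sigma)\to 1$ as $r\to 1^-$. Using the elementary chain $1-|f(r\sigma)|^2 \leq 2\Re(1-f(r\sigma))\leq 2|1-f(r\sigma)|$, a division yields $|1-f(r\sigma)| \leq \tfrac{2\alpha(1-r)}{1+r}$, so that $\limsup_{r\to 1^-}\tfrac{|1-f(r\sigma)|}{1-r}\leq \alpha$, which gives the claimed estimate. The main obstacle is the polydisc horospherical computation in the second paragraph: after localizing both maxima to $\{1,\dots,\nu\}$, the common per-factor shift $\tfrac12\log\tfrac{1-r}{1+r}$ commutes with the finite maximum, but a careful proof requires verifying that for $k$ large both maxima are indeed attained only within the boundary indices. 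The remaining ingredients --- Schwarz--Pick contraction, the disc Busemann function, and the final elementary rearrangement --- are routine.
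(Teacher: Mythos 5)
The paper does not prove this statement at all: it is quoted verbatim (in simplified form) from Abate's Julia--Wolff--Carath\'eodory theorem in polydiscs, \cite[Theorem~0.7]{Abapoly}, so there is no in-paper proof to compare against. Your argument is a correct, self-contained derivation of exactly the special case needed, and it follows the standard Julia-lemma template: Schwarz--Pick contraction, insertion of $K_{\D^N}(\underline{0},\xi_k)$, the unrestricted disc limit $K_\D(w,\zeta)-K_\D(0,\zeta)\to\tfrac12\log\frac{|1-w|^2}{1-|w|^2}$ as $\zeta\to 1$, and the elementary estimate $1-|w|^2\le 2|1-w|$ at the end. The only step that genuinely needs care is the polydisc computation $K_{\D^N}(r\sigma,\xi_k)-K_{\D^N}(\underline{0},\xi_k)\to\tfrac12\log\frac{1-r}{1+r}$, and you handle it correctly: the factors with $|\sigma_j|<1$ contribute only bounded quantities while the boundary factors blow up, so both maxima localize to $j\in\{1,\dots,\nu\}$ for $k$ large, and since each of those finitely many differences tends to the \emph{same} constant $\tfrac12\log\frac{1-r}{1+r}$, the maximum commutes with the limit (write $K_\D(r,\xi_{k,j})=K_\D(0,\xi_{k,j})+\tfrac12\log\frac{1-r}{1+r}+o(1)$ uniformly over the finite index set). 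The resulting Julia inequality $\frac{|1-f(r\sigma)|^2}{1-|f(r\sigma)|^2}\le\alpha\,\frac{1-r}{1+r}$ then yields $|1-f(r\sigma)|\le\frac{2\alpha(1-r)}{1+r}$, which is in fact slightly stronger than the stated conclusion. No gaps.
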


We can prove the following:

\begin{proposition}
Let $N\geq 2$. Let $F:\D^N\to \D^N$ be a holomorphic self-map  without fixed points in $\D^N$. Then there exist $\sigma\in\partial \D$,  and $j_0\in \{1,\ldots, N\}$ such that for every $m\in \mathbb N$ there exists $q_m\in \partial (\D^N)$ so that for all $\zeta\in \D$
\begin{equation}\label{classJulia}
\frac{|\sigma-\pi_{j_0}(F^{\circ m}(\zeta q_m)) |^2}{1-|\pi_{j_0}(F^{\circ m}(\zeta q_m))|^2}\leq  \frac{|\sigma-\zeta |^2}{1-|\zeta|^2}.
\end{equation}
Moreover, the constant map $\zeta\mapsto \sigma$ is a limit (uniform on compacta) of $\{\pi_{j_0}(F^{\circ m}(\zeta q_m))\}$ and if $g$ is any other limit  of $\{\pi_{j_0}(F^{\circ m}(\zeta q_m))\}$ then
\begin{enumerate}
\item either $g$ is the identity,
\item or, $g$ is a holomorphic self-map of $\D$ without fixed points and with  Denjoy-Wolff point  $\sigma$. 
\end{enumerate}
\end{proposition}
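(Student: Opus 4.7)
The plan is to extract a target $\sigma$ and an index $j_0$ from a record sequence, to build $q_m$ for each $m$ as a subsequential limit of rescaled orbit points, and then to derive \eqref{classJulia} from a single Schwarz--Pick inequality for $F^{\circ m}$ combined with the record property. The limit analysis as $m\to\infty$ is then a routine Montel and maximum-modulus argument.

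First, fix $z_0=0$ and let $\{n_k\}$ be a record sequence for $0$; since $k_{\D^N}(0,z)=\operatorname{arctanh}\|z\|_\infty$, this reads $\|F^{\circ j}(0)\|_\infty\le\|F^{\circ n_k}(0)\|_\infty$ for $j\le n_k$. Pass to a subsequence so that $F^{\circ n_k}(0)\to\tau\in\partial\D^N$, and then to a further subsequence so that a fixed $j_0$ realises the maximum norm, $|\pi_{j_0}(F^{\circ n_k}(0))|=\|F^{\circ n_k}(0)\|_\infty$ for every $k$; set $\sigma:=\pi_{j_0}(\tau)\in\partial\D$. For each $m\in\mathbb N$ and $k$ with $n_k>m$, put $u_{m,k}:=F^{\circ(n_k-m)}(0)$, $v_{m,k}:=F^{\circ n_k}(0)=F^{\circ m}(u_{m,k})$, $t_{m,k}:=\|u_{m,k}\|_\infty\in(0,1)$, and $q_{m,k}:=\bar\sigma\,u_{m,k}/t_{m,k}\in\partial\D^N$, so that $u_{m,k}=t_{m,k}\sigma\,q_{m,k}$. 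By compactness of $\partial\D^N$ extract a subsequence in $k$ along which $q_{m,k}\to q_m$, and set $h_m(\zeta):=\pi_{j_0}(F^{\circ m}(\zeta q_m))$, a holomorphic self-map of $\D$ toward which $h_{m,k}(\zeta):=\pi_{j_0}(F^{\circ m}(\zeta q_{m,k}))$ converges locally uniformly on $\D$.

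The crux is that $\zeta\mapsto\zeta q_{m,k}$ is a complex geodesic in $\D^N$ and $F^{\circ m}$ is Kobayashi-contracting, so $k_\D(h_{m,k}(\zeta),\pi_{j_0}(v_{m,k}))\le k_\D(\zeta,t_{m,k}\sigma)$. Writing this out in pseudo-hyperbolic form, squaring, and using the identity $|1-\bar a b|^2=|a-b|^2+(1-|a|^2)(1-|b|^2)$ rearranges it to
\[
\frac{|h_{m,k}(\zeta)-\pi_{j_0}(v_{m,k})|^2\big/(1-|h_{m,k}(\zeta)|^2)}{|\zeta-t_{m,k}\sigma|^2\big/(1-|\zeta|^2)}\le\frac{1-|\pi_{j_0}(v_{m,k})|^2}{1-t_{m,k}^2}.
\]
The step where the record property is used is that $t_{m,k}=\|u_{m,k}\|_\infty\le\|v_{m,k}\|_\infty=|\pi_{j_0}(v_{m,k})|$, so the right-hand side is $\le 1$. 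Letting $k\to\infty$, with $\pi_{j_0}(v_{m,k})\to\sigma$ and $t_{m,k}\to 1$, yields \eqref{classJulia}.

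For the limit statements as $m\to\infty$: since $h_m(0)=\pi_{j_0}(F^{\circ m}(0))$, along $m=n_k$ one has $h_{n_k}(0)\to\sigma\in\partial\D$, so by the maximum modulus principle every locally uniform subsequential limit of $\{h_{n_k}\}$ is identically $\sigma$, whence $h_{n_k}\to\sigma$ locally uniformly. For an arbitrary subsequential limit $g$ of $\{h_m\}$, passing \eqref{classJulia} through the limit either forces $g\equiv\sigma$ (the only boundary constant keeping the Julia quotient finite) or produces $g:\D\to\D$ satisfying the Julia inequality at $\sigma$ with dilation $\le 1$; in the latter case $g$ preserves every horoball at $\sigma$, so an interior fixed point of $g$ would lie in every such horoball, which is impossible unless $g=\mathrm{id}$, and otherwise the uniqueness of the Denjoy--Wolff point among boundary points with Julia dilation $\le 1$ forces $\sigma$ to be the Denjoy--Wolff point of $g$. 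The main obstacle is organising the nested subsequence extractions so that $j_0$ realises the maximum norm along the record sequence at every $k$ and so that $q_{m,k}\to q_m$ for each $m$; once this is in place, the algebraic rearrangement of Schwarz--Pick and the passage to the limit are routine.
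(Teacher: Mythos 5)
Your argument is correct, and for the central inequality it takes a genuinely different route from the paper. The paper derives \eqref{classJulia} by feeding the record-sequence estimate into Abate's Julia--Wolff--Carath\'eodory theorem for polydiscs (to get $\limsup_{r\to 1^-}\frac{1-\pi_{j_0}(F^{\circ m}(rq_m))}{1-r}\le 1$) and then invoking the classical Julia lemma. You instead prove the Julia-type inequality directly: the Schwarz--Pick inequality for $F^{\circ m}$ along the analytic disc $\zeta\mapsto\zeta q_{m,k}$, rewritten via $|1-\bar a b|^2=|a-b|^2+(1-|a|^2)(1-|b|^2)$, reduces everything to the single ratio $\bigl(1-|\pi_{j_0}(v_{m,k})|^2\bigr)/\bigl(1-t_{m,k}^2\bigr)$, which the record property (combined with the choice of $j_0$ realizing the sup-norm, i.e.\ the paper's \eqref{Eq:equal-up-down}) bounds by $1$; letting $k\to\infty$ gives \eqref{classJulia}. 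This is in effect a self-contained proof of the needed instance of Julia's lemma, and it is arguably cleaner since it avoids the external machinery; the underlying mechanism (record sequence plus the coordinate attaining the maximum) is the same, and your $q_m$ agrees with the paper's up to the harmless rotation by $\bar\sigma$. The identification of the limit $h_{n_k}\to\sigma$ by maximum modulus and the dichotomy for other limits match the paper.

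One small imprecision: in the final dichotomy, the sentence ``an interior fixed point of $g$ would lie in every such horoball'' is not literally what happens (invariance of horoballs does not place a fixed point inside all of them). The correct, standard justification is via dilations: \eqref{classJulia} for $g$ says $\sigma$ is a boundary regular fixed point of $g$ with dilation at most $1$, and a holomorphic self-map of $\D$ other than the identity that has an interior fixed point has dilation strictly greater than $1$ at every boundary fixed point; hence either $g={\sf id}$ or $g$ is fixed-point free and $\sigma$, being a boundary fixed point with dilation $\le 1$, is its Denjoy--Wolff point. Since the paper itself asserts this conclusion without further argument, this is a cosmetic fix rather than a gap.
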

\begin{proof}
Let $\{n_k\}$ be a record sequence for $F$ with respect to $\underline{0}=(0,\ldots, 0)$.

Up to extracting subsequences from $\{n_k\}$, we can assume that $\{F^{\circ n_k}(\underline{0}))\}$ converges to some point $q\in\partial (\D^N)$. Let $\pi_j:\D^N\to \D$ be the projection on the $j$-th component. Now,  for every $k$, there exists $j_k\in \{1,\ldots, N\}$ such that 
\[
K_{\D^N}(\underline{0}, F^{\circ n_k}(\underline{0}))=K_{\D}(\pi_{j_k}(\underline{0}), \pi_{j_k}(F^{\circ n_k}(\underline{0})).
\]
Since $\{n_k\}$ converges to $+\infty$ and  $j_k\in \{1,\ldots, N\}$ (a finite set), there exists $j_0\in \{1,\ldots, N\}$ such that $j_k=j_0$ for infinitely many $k$. Up to extracting subsequences we can thus assume that $K_{\D^N}(\underline{0}, F^{\circ n_k}(\underline{0}))=K_{\D}(\pi_{j_0}(\underline{0}), \pi_{j_0}(F^{\circ n_k}(\underline{0}))$ for all $k$. Also, up to permuting coordinates, we can assume that $j_0=1$, that is, for all $k\in \mathbb N$,
\begin{equation}\label{Eq:good-choice}
K_{\D^N}(\underline{0}, F^{\circ n_k}(\underline{0}))=K_{\D}(\pi_{1}(\underline{0}), \pi_{1}(F^{\circ n_k}(\underline{0}))).
\end{equation}
Note that $\{\pi_{1}(F^{\circ n_k}(\underline{0}))\}$ converges to a point in $\partial \D$ (since $\{K_{\D^N}(\underline{0}, F^{\circ n_k}(\underline{0}))\}$---and hence $\{K_{\D}(0, \pi_{1}(F^{\circ n_k}(\underline{0}))\}$---converges to $+\infty$) and $\{F^{\circ n_k}(\underline{0})\}$ converges to a point in $\partial (\D^N)$. Thus, $\{F^{\circ n_k}(\underline{0})\}$ converges to $(q_1,q_2,\ldots, q_N)$, with $|q_1|=1$. 

Let $\nu$ be the number of $q_j$'s such that $|q_j|=1$, $1\leq \nu\leq N$. Up to  permutation of coordinates we can assume that $|q_j|=1$ for $j=1,\ldots, \nu$ and $|q_j|<1$ for $j=\nu+1,\ldots, N$. Then, replacing $F$ with the $T^{-1} \circ F \circ T$, where $T$ is the automorphism of $\D^N$ given by 
\[
(z_1,\ldots, z_N)\mapsto \left(q_1z_1, \ldots,  q_\nu z_\nu, z_{\nu+1}, \ldots, z_{N}\right),
\]
we can assume that $\{F^{\circ n_k}(\underline{0})\}$ converges to $q:=(1,\ldots, 1, \alpha_{\nu+1}, \ldots, \alpha_N)$ where $|\alpha_j|<1$, $j=\nu+1,\ldots, N$.

Let $w_k:=F^{\circ n_k}(\underline{0})$. Note that, by \eqref{Eq:good-choice}, we have 
\begin{equation}\label{Eq:equal-up-down}
K_{\D^N}(\underline{0},w_k)=K_\D(0,\pi_1(w_k)),
\end{equation}
 for all $k\in \mathbb N$.  Fix $m\in \mathbb N$.  Let 
\[
w_{k}^m:=F^{\circ (n_k-m)}(\underline{0}).
\]
Up to extracting subsequences if necessary, we can assume that $\{w_{k}^m\}$ converges to some point $(q_1^m, q_2^m,\ldots, q_N^m)\in\partial (\D^N)$. Note that for $j=1,\ldots, N$
\[
K_\D(\pi_j(w_k), \pi_j(w_{k}^m))\leq K_{\D^N}(w_k, w_{k}^m)\leq K_{\D^N}(F^{\circ m}(\underline{0}), \underline{0})<+\infty.
\]
This implies that $\{\pi_j(w_{k}^m)\}$ converges to $1$ for $j=1,\ldots, \nu$ and$\{\pi_j(w_{k}^m)\}$ converges to some point in $\D$, $j=\nu+1,\ldots, N$,  that is, $\{w_{k}^m\}$ converges to some point $q_m=(1,\ldots, 1, \alpha_{\nu+1}^m, \ldots, \alpha_N^m)$ for some $\alpha_j^m\in \D$ as $k\to\infty$.
 Also,
\begin{equation}\label{Eq:JWC1}
\lim_{k\to \infty}\pi_1(F^{\circ m}(w_{k}^m))=\lim_{k\to \infty}\pi_1(F^{\circ m}(F^{\circ (n_k-m)}(\underline{0}))=\lim_{k\to \infty}\pi_1(F^{\circ n_k}(\underline{0}))=1.
\end{equation}

Now, for every $n_k>m$, 
\begin{equation}\label{Eq:JWC2}
\begin{split}
 &K_{\D^N}(\underline{0}, w_k^m)-K_\D(0, \pi_1(F^{\circ m}(w_k^m)))\\&=K_{\D^N}(\underline{0}, F^{\circ (n_k-m)}(\underline{0}))-K_\D(0, \pi_1(F^{\circ m}(F^{\circ (n_k-m)}(\underline{0}))))\\ &\leq
K_{\D^N}(\underline{0}, F^{\circ n_k}(\underline{0}))-K_\D(0, \pi_1(w_k))\\&=K_{\D^N}(\underline{0}, w_k)-K_\D(0, \pi_1(w_k))\stackrel{\eqref{Eq:equal-up-down}}{=}0.
\end{split}
\end{equation}

Let $f:=\pi_1\circ F^{\circ m}:\D^N\to \D$. Equations \eqref{Eq:JWC1} and \eqref{Eq:JWC2}, with $\xi_k=w_{k-m}$, allows us to apply Theorem~\ref{abatep} to such an $f$. Hence,
\[
\limsup_{r\to 1^-}\frac{|1-\pi_1(F^{\circ m}(rq_m))|}{1-r}\leq 1.
\]
This implies that 
\begin{equation*}
\begin{split}
&\liminf_{\D\ni \zeta\to 1}\frac{1-|\pi_1(F^{\circ m}(\zeta q_m))|}{1-|\zeta|}\leq 
\limsup_{r\to 1^-}\frac{1-|\pi_1(F^{\circ m}(rq_m))|}{1-r}\\&
\leq \limsup_{r\to 1^-}\frac{|1-\pi_1(F^{\circ m}(rq_m))|}{1-r}\leq 1.
\end{split}
\end{equation*}

The classical Julia's lemma (see, {\sl e.g.}, \cite[Theorem~1.4.7]{BCDM}) implies then \eqref{classJulia}---with $j_0=1$. 
Equation \eqref{classJulia} implies that, if $\{\pi_1(F^{\circ m_k}(\underline{0})\}$ converges to some point in $\partial\D$, then actually   $\{\pi_1(F^{\circ m_k}(\underline{0})\}$ converges to $1$.

Now, let $\{m_k\}$ be a sequence such that $\{F^{\circ m_k}\}$ converges uniformly on compacta and $q_{m_k}$ converges to some $q_\infty=(1,\ldots, 1, \beta_{\nu+1},\ldots, \beta_N)$, where $\beta_j\in\overline{\D}$, $j=\nu+1,\ldots, N$. Thus, $\{\pi_1(F^{\circ m_k}(\zeta q_m)\}$ converges uniformly on compacta of $\D$ to a holomorphic map $g:\D\to \D\cup\{1\}$. 

Assume $g(\D)\subset \D$. By \eqref{classJulia}, for all $\zeta\in \D$ we have
\[
\frac{|1-g(\zeta) |^2}{1-|g(\zeta)|^2}\leq \frac{|1-\zeta |^2}{1-|\zeta|^2},
\]
thus $g$ is not constant, and it is either the identity or a holomorphic self-map of $\D$ with no fixed points and  $1$ is its Denjoy-Wolff point.
\end{proof}

Note that \eqref{classJulia} and Lemma~\ref{Lem:orbits} imply that if 
\[
\liminf_{m\to\infty} k_{\D^N}(\underline{0}, \pi_{j_0}(F^{\circ m}(\underline{0})))=+\infty,
\]
then $\{\pi_{j_0}(F^{\circ m}(z)\}$ converges to $\sigma$ for all $z\in \D^N$ as $m\to\infty$.

In general, we can ask the following question:
\begin{question}
Assume that 
\[
\Omega=D_1\times \ldots \times D_\ell,
\]
where $D_j$ is a bounded convex domain in $\C^{N_j}$, $j=1,\ldots, \ell$ and $N_1+\ldots+N_\ell=N\geq 2$. Suppose $D_j$ has the Denjoy-Wolff property for every $j=1,\ldots, \ell$. Let $F:\Omega\to\Omega$ be holomorphic and such that $F$ has no fixed points in $\Omega$. Is it true that there exist $j_0\in\{1,\ldots, \ell\}$ and $p_{j_0}\in \partial D_{j_0}$ such that $\{\pi_{j_0}(F^{\circ m}(z))\}$ converges to $p_{j_0}$ for all $z\in \Omega$ as $m\to \infty$? 
\end{question}


\begin{thebibliography}{10}

\bibitem{Aba}  {\sc M. Abate}, {\it Horospheres and iterates of holomorphic maps}, Math. Z. 198, 2, (1988), 225-238.

\bibitem{AbateTaut}  {\sc M. Abate}, {\it Iteration theory of holomorphic maps on taut manifolds},  Research and Lecture, Mediterranean Press, Rende (1989), xvii+417.

\bibitem{Abapoly} {\sc M. Abate}, {\sl The Julia-Wolff-Carath\'eodory theorem in polydisks}, J. Anal. Math. 74 (1998), 275-306. 

\bibitem{AbateNewbook} {\sc M. Abate}, {\it Holomorphic dynamics on hyperbolic Riemann surfaces}. De Gruyter Stud. Math., 89 de Gruyter, Berlin, 356 pp., 2023. 

\bibitem{AR} {\sc M. Abate, J. Raissy}, {\it Wolff-Denjoy theorems in nonsmooth convex domains}, Ann. Mat. Pura Appl. 193.5, (2014), 1503-1518.


\bibitem{BB} {\sc A. M. Benini, F. Bracci}, {\it The Denjoy-Wolff Theorem in simply connected domains}, to appear: Trans. Amer. Math. Soc., arXiv preprint arXiv:2409.11722.

\bibitem{BM}  {\sc G. Bharali, A. Maitra}, {\it A weak notion of visibility, a family of examples, and Wolff-Denjoy theorems}, Ann. Scuola Norm. Sup. Cl. Sci. 22.1, (2021), 195-240.


\bibitem{BZ} {\sc G. Bharali, A. Zimmer}, {\it Goldilocks domains, a weak notion of visibility, and applications}, Adv. Math. 310, (2017), 377-425.

\bibitem{BZ2} {\sc G. Bharali and A. Zimmer}, {\it Unbounded visibility domains, the end compactification, and applications}, Trans. Amer. Math. Soc. 376, (2023), 5949-5988.

\bibitem{Br} {\sc F. Bracci}, {\it Frames of quasi-geodesics, visibility and geodesic loops}, Appl. Set-Valued Anal. Optim. 6 (2024), 53-63

\bibitem{BCDM} {\sc F. Bracci, M. D. Contreras, S. Diaz-Madrigal}, {\it Continuous Semigroups of holomorphic self-maps of the unit disc}, Springer Monographs in Mathematics, 566 pp., (2020).

\bibitem{BGh} {\sc F. Bracci, H. Gaussier}, {\it Horosphere Topology}, Ann. Scuola Norm. Sup. di Pisa, Cl. Sci. 20, 1, 239-289, (2020) 

\bibitem{BG} {\sc F. Bracci, H. Gaussier}, {\it Abstract boundaries and continuous extension of biholomorphisms}, Anal. Math. 4, no. 2, 393-409 (2022).

\bibitem{BGZ} {\sc F. Bracci, H. Gaussier, A. Zimmer}, {\it Homeomorphic extension of quasi-isometries for convex domains in $\C^d$ and iteration theory}. Math. Ann. 379, 691-718 (2021).

\bibitem{BNT} {\sc F. Bracci, N. Nikolov, P. J. Thomas},{\it Visibility of Kobayashi geodesics in convex domains and related properties}, Math. Z. 301, (2022), 2011-2035.

\bibitem{BO} {\sc F. Bracci, A. Y. \"Okten}, {\it On the failure of the Denjoy-Wolff Theorem in convex domains} arXiv preprint arXiv:2507.12098.

\bibitem{Bud} {\sc M. Budzy\'nska}, {\it The Denjoy-Wolff theorem in $\C^n$}. Nonlinear Anal. 75, (2012), 22-29.

\bibitem{BKR} {\sc M. Budzy\'nska, T. Kuczumow, S. Reich}, {\it A Denjoy-Wolff Theorem for Compact Holomorphic Mappings in Complex Banach Spaces}, Ann. Acad. Sci. Fenn. Math., Vol. 38, No. 1, Finnish Academy of Science and Letters, (2013), 747-756.


\bibitem{CMS} {\sc V. S. Chandel, A. Maitra, A.D. Sarkar}, {\it Notions of visibility with respect to the Kobayashi distance:
comparison and applications}, Ann. Mat. Pura App. 154 (2023) https://doi.org/10.1007/s10231-023-
01371-6

\bibitem{CR} {\sc C.H. Chu, M. Rigby}, {\it Horoballs and iteration of holomorphic maps on bounded symmetric domains}, Adv. Math., 311, (2017), 338-377.

\bibitem{Den} {\sc A. Denjoy}, {\it Sur l'it\'eration des fonctions analytiques}, C. R. Acad. Sci. Paris 182, (1926), 255-257.

\bibitem{Her} {\sc M. Herv\'e}, {\it It\'eration des transformations analytiques dans le bicercle-unit\'e}, Ann. Sci. Ec. Norm. Sup. 71, (1954), 1-28.

\bibitem{Her2} {\sc M. Herv\'e}, {\it Quelques propri\'et\'es des applications analytiques d'une boule \`a m dimensions dans elle-meme}, J. Math. Pures Appl. 42, (1963), 117-147.

\bibitem{JP} {\sc M. Jarnicki, P. Pflug}, {\it Invariant distances and metrics in complex analysis}, 2nd extended ed., de Gruyter Expo. Math., vol. 9, Walter de Gruyter, Berlin (2013).

\bibitem{Kar} {\sc A. Karlsson}, {\it Non-expanding maps and Busemann functions}, Ergodic Theory Dyn. Syst. 21, (2001), 1447-1457.

\bibitem{Lem} {\sc L. Lempert}, {\it La m\'etrique de Kobayashi et la repr\'esentation des domaines sur la boule}, Bull. Soc. Math.France 109 (1981), 427-474.

\bibitem{Nik} {\sc N. Nikolov}, {\it Comparison and localization of invariant functions on strongly pseudoconvex domains} Bull. Lond. Math. Soc. 55 (2023), no. 4, 2052-2061.

\bibitem{Okt} {\sc A. Y. \"Okten}, {\it On non-visibility of Kobayashi geodesics and the geometry of the boundary}, J. Geom. Anal. 34, 371, (2024), https://doi.org/10.1007/s12220-024-01804-5.


\bibitem{Wol1} {\sc J. Wolff}, {\it Sur l'it\'eration des fonctions holomorphes dans une r\'egion, et dont les valeurs appartiennent \`a cette r\'egion}, C. R. Acad. Sci. Paris 182, (1926), 42-43.

\bibitem{Wol2} {\sc J. Wolff}, {\it Sur l'it\'eration des fonctions born\'ees}, C. R. Acad. Sci. Paris 182, (1926), 200-201.

\bibitem{Zi0} {\sc A. Zimmer}, {\it Characterizing domains by the limit set of their automorphism groups}, Adv. Math. 308,
438-482 (2017).

\bibitem{Zi1} {\sc A. Zimmer}, {\sl Gromov hyperbolicity and the Kobayashi metric on convex domains of finite type}. Math. Ann. 365, 1425-1498 (2016)

\bibitem{Zi2} {\sc A. Zimmer}, {\sl Gromov hyperbolicity, the Kobayashi metric, and $\C$-convex sets}. Trans. Am. Math. Soc.369, 8437-8456 (2017)
		


\end{thebibliography}
\end{document}